\newcommand\nc{\newcommand}
\nc{\N}{\mathbb{N}}
\nc{\abs}[1]{\left\lvert #1 \right\rvert}
\nc{\dt}{\mathrm{d}t}
\nc{\weak}{\rightharpoonup}
\nc{\eps}{\epsilon}
\nc{\Mino}{\mathbb{M}}
\nc{\weakstar}{\stackrel{*}{\rightharpoonup}}
\renewcommand{\div}{\operatorname{div}}
\nc{\od}[2]{\frac{d#1}{d#2}}
\nc{\be}{\begin{equation}}
\nc{\ee}{\end{equation}}
\nc{\bd}{\begin{displaymath}}
\nc{\ed}{\end{displaymath}}
\nc{\bq}{\begin{eqnarray}}
\nc{\eq}{\end{eqnarray}}
\nc{\p}{\partial}
\nc{\ra}{\rightarrow}
\nc{\R}{\mathbb{R}}
\nc{\dd}{\mathrm{d}}
\nc{\dx}{~\mathrm{d}x}
\nc{\curl}{\operatorname{curl}}
\nc{\dv}{\operatorname{div}}
\nc{\meas}{\operatorname{meas}}
\nc{\supp}{\operatorname{supp}}
\nc{\medotimes}{\resizebox{11pt}{!}{$\otimes$}}
\nc{\T}{\mathrm{T}}
\nc{\wk}{\rightharpoonup}
\nc{\ep}{\varepsilon}
\nc{\hep}{\hat{\ep}} 
\nc{\Id}{\mathrm{Id}}
\nc{\norm}[1]{\left\|#1\right\|}
\nc{\pl}{\mathrm{pl}}
\nc{\el}{\mathrm{el}}
\nc{\Curl}{\operatorname{Curl}}
\nc{\B}{\mathcal{B}}
\nc{\M}{\mathcal{M}}
\nc{\G}{\mathcal{G}}
\nc{\commentK}[1]{{\color{red}[KA comment:~#1]}}
\nc{\commentM}[1]{{\color{blue}[MK comment:~#1]}}
\nc{\commentP}[1]{{\color{green}[PWD comment:~#1]}}
\theoremstyle{plain}
\newtheorem{theorem}{Theorem}
\newtheorem{lemma}[theorem]{Lemma}
\newtheorem{proposition}[theorem]{Proposition}
\theoremstyle{definition}
\theoremstyle{remark}
\newtheorem{remark}{Remark}
\newtheorem{example}{Example}
\newcommand{\EOR}{\color{black}}
\begin{document}

\author[Keith~Anguige]{Keith Anguige, Patrick Dondl}
\address{Abteilung für Angewandte Mathematik,
	Albert-Ludwigs-Universität Freiburg,
	Hermann-Herder-Str. 10,
	79104 Freiburg i. Br., Germany}
\email{keith.anguige@mathematik.uni-freiburg.de} \email{patrick.dondl@mathematik.uni-freiburg.de}

\author{Martin Kru\v{z}\'{\i}k}
\address{Institute of Information Theory and Automation,
Czech Academy of Sciences, Pod vod\'{a}renskou
v\v{e}\v{z}\'{\i}~4, CZ-182~08~Praha~8, Czech Republic (corresponding
address) \& Faculty of Civil Engineering, Czech Technical
University, Th\'{a}kurova 7, CZ-166~ 29~Praha~6, Czech Republic}
\email{kruzik@utia.cas.cz}

\keywords{Existence of minimizers, single-crystal plasticity, cross-hardening, geometrically necessary dislocations, strain-gradient}

\subjclass[2010]{49J10, 49J45, 74G25, 74G65, 74N15}

\title{On the existence of minimisers for strain-gradient single-crystal plasticity}

\begin{abstract}We prove the existence of minimisers for a family of models related to the single-slip-to-single-plane relaxation of single-crystal, strain-gradient elastoplasticity with $L^p$-hardening penalty. In these relaxed models, where only one slip-plane normal can be activated at each material point, the main challenge is to show that the energy of geometrically necessary dislocations is lower-semicontinuous along bounded-energy sequences which satisfy the single-plane condition, meaning precisely that this side condition should be preserved in the weak $L^p$-limit. This is done with the aid of an `exclusion' lemma of Conti \& Ortiz, 
 which essentially allows one to put a lower bound on the dislocation energy at interfaces of (single-plane) slip patches, thus precluding fine phase-mixing in the limit. Furthermore, using div-curl techniques in the spirit of Mielke \& M\"uller, we are able to show that the usual multiplicative decomposition of the deformation gradient into plastic and elastic parts interacts with weak convergence and the single-plane constraint in such a way as to guarantee lower-semicontinuity of the (polyconvex) elastic energy, and hence the total elasto-plastic energy, given sufficient ($p>2$) hardening, thus delivering the desired result.
\end{abstract}
 
\maketitle 

\section{Introduction and main results}

Plastic deformation in crystals has long been known to be mediated by the motion of crystal imperfections, or {\em dislocations}, through the material. Each such dislocation travels predominantly on a given crystallographic plane, a so-called {\em slip-plane}, in the direction of a fixed {\em Burgers' vector}, where the Burgers' vectors are also determined by the crytallographic structure. As a consequence of this, the plastic strain can be thought of as the product of a number of simple-shear deformations, each with a given crystallographically determined shear normal and shear direction.
  
In numerous experiments, lamination-type microstructures with alternating layers of slip-system activity have been observed~\cite{Rasmussen_80a, Jin_84a, Bay_92a,dm}, and this effect, which will be a central consideration in what follows, is widely believed to be a consequence of {\em cross-hardening}~\cite{ortiz,dm}, the phenomenon whereby activity in one slip system at a given point suppresses activity in all other slip systems at that point. A microscopic explanation for the effect is that it arises from the formation of energetically favorable (and \emph{sessile}) dislocation products when two dislocations from different slip-planes meet -- these {\em Lomer-Cottrell locks} have been observed in experiments~\cite{Lee_13}\EOR, and also studied in detail in atomistic simulations~\cite{Rodney:1999vs}. The idea is that, in order to continue the plastic deformation with activity in more than one slip-plane, either these locks have to be broken or new dislocation loops must be formed, thus necessitating an increased energy input for plastic deformations involving activity in multiple slip-systems.

In their seminal article \cite{ortiz}, Ortiz and Repetto  proposed a method for reproducing experimentally observed sub-grain pattern formation in plastically deforming crystals. The basic idea, which we will follow in this paper, is to model plastic evolution by an incremental time-stepping procedure, where in each step the sum of an elastic energy and a stored plastic energy is minimised. They then propose a non-convex stored energy in order to account for cross-hardening, which leads to the formation of microstructure.

As in \cite{ang,ang2}, we will introduce the aforementioned non-convexity generated by cross-hardening into this framework in the simplest possible way, namely by enforcing a hard single-slip condition, such that at each material point any finite-energy plastic deformation has to occur in single slip. We then relax this single-slip condition to a single-plane condition (namely that only one slip-plane can be active at each point) with the aid of a laminated microstructure -- similar side-conditions have been considered by several other authors; see, for example~\cite{dolz,dolz2}. In~\cite{dm} it was shown that the predicted laminate microstructure arising from this assumption of infinite cross hardening does indeed match experimental results, while evolutionary models of such laminate structures have been analysed in~\cite{Hackl1, Hackl2}.

Here, we continue our previous investigations \cite{ang,ang2}, which focused on optimal energy scalings and relaxation of the single-slip condition to a (still non-convex) single-plane condition, by looking at the existence question for a class of incremental minimisation problems which arise in the way described above. Our main goal is to extend the existence result of Mielke and M\"uller~\cite{MM} for finite, multiplicative strain-gradient elasto-plasticity to the single-crystal, single-plane case. It turns out that the single-plane restriction on the plastic slip allows one nicely to control the inverse of the plastic deformation, and, with the aid of a sufficiently strong div-curl lemma, the weak continuity of the minors of the elastic deformation -- as a consequence, this allows for more general (and realistic) material parameters than in \cite{MM}. On the other hand, proving that the non-convex single-plane constraint is preserved along energy-minimising sequences, which is the key lower-semicontinuity property, requires considerable additional analysis. In particular, we have to adapt an exclusion Lemma of \cite{co}, originally designed for a 2-d problem, to our 3-d setting in such a way as to exclude any further phase mixing in our ostensibly relaxed single-plane models.

The family of single-plane incremental energies treated here is related to the {\em single-plane} relaxation of the {\em  single-slip} model discussed in \cite{ang,ang2}, and contains a strain-gradient penalisation of geometrically necessary dislocations, along with an $L^p$-stored-energy term. We examine this relationship later on, and, while no explicit formula for the relaxation seems to be available in general, we are nevertheless able to write down matching upper and lower bounds for the relaxed single-plane energy functional which ensure the applicability of our main result, namely the following. 

\begin{theorem}\label{thm:main}
Let $\Omega \subset \R^3$ be a bounded, Lipschitz domain, and let $p>2$. Let $m_j$, for $j=1,\ldots N$, be a family of slip normals, and $s_j\in m_j^{\perp}$ the corresponding slip vectors. Further suppose that $W_{\mathrm{el}}$ is a polyconvex, frame-indifferent elastic-energy density satisfying the growth condition 
\[
W_{\mathrm{el}}(A) \ge -c_1 + c_2\abs{A}^q\quad \textrm{with}\quad q>p/(p-2),
\]
and that the single-plane plastic energy, $E_\pl$, is w.l.s.c. in $(L^p(\Omega))^N$, and satisfies 
\[
E_\pl\left(\{s_j\}_{j=1}^N\right)\geq C\left(\sum_{j=1}^N \|{s_j}\|_{L^p(\Omega)}^p + \sum_{j=1}^N\int_{\Omega}\abs{\nabla_{m_j^\perp} s_j}\right),
\]
for some constant $C>0$.

Then, writing $E_{\mathrm{el}}\left(y, \{s_j\}_{j=1}^N\right):= \int_{\Omega} W_{\mathrm{el}}\left(\nabla y\,F_\pl^{-1}\right)dx$, where the plastic-deformation tensor is given by $F_\pl = \Id + \sum_{j=1}^N s_j \otimes m_j$ and $y(x)$ is the deformation, the single-plane elastoplastic energy
\[
E\left(y, \{s_j\}_{j=1}^N\right) =  \left\{\begin{array}{ccl}
 E_{\mathrm{el}}\left(y, \{s_j\}_{j=1}^N\right) + E_\pl\left(\{s_j\}_{j=1}^N\right) & : & \abs{s_j}\abs{s_i} = 0~\textrm{a.e., for}~i\neq j\\
 & & \\
 +\infty & : & \mathrm{otherwise}
\end{array} \right.,
\]
admits a minimiser $\left(y^{\ast},\{s^{\ast}_j\}_{j=1}^N\right)$ in the class $(W^{1,r}(\Omega))^3\times (L^p(\Omega))^N$, where $\frac{1}{r}=\frac{1}{p}+\frac{1}{q}$.
\end{theorem}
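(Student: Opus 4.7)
The plan is to apply the direct method of the calculus of variations. Let $(y^n,\{s_j^n\}_{j=1}^N)$ be a minimising sequence for $E$, so in particular the single-plane constraint $|s_i^n||s_j^n|=0$ a.e.\ holds for each $n$. By coercivity of $E_\pl$ the slips $s_j^n$ are bounded in $L^p(\Omega)$ with a uniformly bounded tangential-BV seminorm. Since $s_j^n\cdot m_j=0$ and at every point the constraint reduces $F_\pl^n:=\Id+\sum_{j}s_j^n\otimes m_j$ to a rank-one perturbation of the identity, we have $\det F_\pl^n\equiv 1$ together with the explicit formula $F_\pl^{n,-1}=\Id-\sum_{j}s_j^n\otimes m_j$, and both $F_\pl^n,F_\pl^{n,-1}$ are bounded in $L^p(\Omega;\R^{3\times 3})$. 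The elastic growth yields $F_\el^n:=\nabla y^n F_\pl^{n,-1}$ bounded in $L^q$, so a H\"older estimate with $1/r=1/p+1/q$ bounds $\nabla y^n$ in $L^r$. After extracting subsequences, $y^n\weak y^\ast$ in $W^{1,r}(\Omega;\R^3)$ and $s_j^n\weak s_j^\ast$ in $L^p(\Omega)$.

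The crux is to show that the non-convex single-plane constraint survives the weak $L^p$-limit. Here I would adapt the exclusion lemma of Conti and Ortiz from its original 2-d setting to our 3-d geometry with multiple slip normals $m_j$. The tangential-gradient bound $\int_\Omega|\nabla_{m_j^\perp}s_j^n|<\infty$ means that $s_j^n$ can only jump across interfaces whose normal is $m_j$, so finite-energy configurations decompose into plate-like patches of a single slip-plane orientation. The exclusion lemma produces a quantitative lower bound on the dislocation energy whenever plates of two different orientations $m_i\neq m_j$ would coexist in an arbitrarily small neighbourhood; consequently no energy-minimising sequence can develop vanishingly fine mixtures of distinct slip systems, and the weak limit $\{s_j^\ast\}$ again satisfies $|s_i^\ast||s_j^\ast|=0$ a.e.\ for $i\neq j$. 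Combined with the assumed weak $L^p$-lower semicontinuity of $E_\pl$, this delivers $E_\pl(\{s_j^\ast\})\le\liminf_n E_\pl(\{s_j^n\})$.

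With the constraint preserved, I would turn to lower semicontinuity of the polyconvex elastic part. Following the div-curl approach of Mielke and M\"uller, the strategy is first to identify the weak $L^q$-limit of $F_\el^n=\nabla y^n F_\pl^{n,-1}$ as $F_\el^\ast:=\nabla y^\ast F_\pl^{\ast,-1}$, and then to show that the $2\times 2$ and $3\times 3$ minors of $F_\el^n$ converge weakly to the corresponding minors of $F_\el^\ast$. The product identification is enabled by the tangential-BV bound, which upgrades $F_\pl^{n,-1}$ to strong $L^p$-compactness via its explicit affine dependence on $s_j^n$, so that it may be passed into the weak limit of $\nabla y^n$. For the minors one exploits that $\nabla y^n$ is curl-free while $F_\pl^{n,-1}$ has a Curl controlled by the tangential-gradient bound, permitting a div-curl argument; the exponent condition $q>p/(p-2)$ calibrates the uniform integrability of cofactor and determinant of $F_\el^n$ so that these minors have weak $L^{>1}$-limits. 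Polyconvexity and frame-indifference of $W_\el$, together with Fatou's lemma, then yield $E_\el(y^\ast,\{s_j^\ast\})\le\liminf_n E_\el(y^n,\{s_j^n\})$, and $(y^\ast,\{s_j^\ast\})$ realises the minimum.

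The main obstacle is the preservation of the single-plane constraint: the set $\{\{s_j\}:|s_i||s_j|=0~\text{a.e.}\}$ is highly non-convex, and only the quantitative control of the dislocation energy at slip-system interfaces, obtained via the 3-d adaptation of the Conti-Ortiz exclusion lemma, rules out spurious mixed-slip activity in the limit. The hardening exponent $p>2$ together with the growth condition $q>p/(p-2)$ play a supporting but essential role, securing precisely the integrability needed for the Mielke-M\"uller div-curl machinery to deliver weak continuity of the minors of $F_\el^n$ and hence close the direct-method argument.
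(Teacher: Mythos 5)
Your outline follows the paper's own route almost step for step (direct method; the H\"older estimate giving $\nabla y^n$ bounded in $L^r$ with $\frac1r=\frac1p+\frac1q$; a 3-d adaptation of the Conti--Ortiz exclusion lemma for weak closedness of the single-plane constraint, which is exactly the mechanism of Lemmas~\ref{lem:main}--\ref{lem:int_min}; Mielke--M\"uller-style div-curl arguments plus polyconvexity for the elastic part). However, one step as written is wrong and would break your identification of the elastic limit: you claim that the tangential-BV bound upgrades $F_\pl^{n,-1}$ to \emph{strong} $L^p$-compactness. The dislocation bound controls only $\nabla_{m_j^\perp}s_j^n$, i.e.\ derivatives within the slip plane; there is no control of the derivative in the $m_j$-direction. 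Laminates $s_j^n(x)=\sigma(2^n x\cdot m_j)\,b$ with $b\perp m_j$ --- precisely the microstructures the single-plane relaxation is built from --- have vanishing tangential gradient and bounded $L^p$-norm, yet converge only weakly; so no strong compactness holds, and $F_\pl^{n,-1}$ cannot simply be ``passed into'' the weak limit of $\nabla y^n$. (Were it true, much of the paper's machinery, and indeed the relaxation phenomenon itself, would be trivial.)

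The repair is what the paper's Lemma~\ref{wk_minors} actually does: the entries of $F_\el^n=\nabla y^n F_\pl^{n,-1}$ (the $1\times1$ minors) are themselves treated by a div-curl argument. Since $s_j^n\perp m_j$, the uncontrolled $m_j$-derivative acts only on the vanishing $m_j$-component of $s_j^n$, whence $\div s_j^n$ is a measure dominated by $|\nabla_{m_j^\perp}s_j^n|$ and thus precompact in $W^{-1,1}$; pairing with the curl-free rows of $\nabla y^n$, the div-curl lemma of Conti--Dolzmann--M\"uller identifies the limit of $\nabla y_i^n\cdot s_j^n$. The hypothesis $q>p/(p-2)$ enters already here, giving equi-integrability of these products in $L^z$ with $\frac1z=\frac2p+\frac1q<1$, not merely at the level of cofactors and determinants as you suggest. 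Your remaining sketch is consistent with the paper: cofactors via $\operatorname{cof}(\nabla y\,F_\pl^{-1})=(\operatorname{cof}\nabla y)F_\pl^{\T}$ using $\det F_\pl=1$, the determinant via a further div-curl pairing, and lower semicontinuity from convexity of the polyconvex integrand under weak $L^1$ convergence of the minors (Fatou alone does not suffice there). Finally, your heuristic that finite-energy slips ``decompose into plate-like patches'' is not what the exclusion argument uses --- the paper's Lemmas~\ref{lemma:curl} and~\ref{lem:int_min} work with slice-wise integral estimates on small parallelepipeds adapted to $(m_1,m_2,m_1\times m_2)$, with no patch regularity assumed --- but the quantitative mechanism you invoke, a lower bound of order $1/l$ on the dislocation energy when two systems mix at scale $l$, is the correct one.
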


\begin{remark}
We recall that $W_{\mathrm{el}}$ is said to be polyconvex \cite{Ball_1977} if  it can be written as a convex function of the deformation gradient, its cofactor matrix, and its determinant, i.e., $W_{\mathrm{el}}(F)=h(F,{\rm Cof}\, F, {\rm det}\, F)$ for all $F$ and some convex function $h$. Moreover, frame-indifference means that $W_{\mathrm{el}}(F) =W_{\mathrm{el}}(RF)$ for all $F\in\R^{3\times 3}$ and every proper rotation $R$.
Strictly speaking, we ought to impose additional non-interpenetration conditions of the form $W_{\mathrm{el}}(F)\to+\infty$ as $\det F\to 0_+$, and $W_{\mathrm{el}}(F)=+\infty$ if $\det F\le 0$. However, these conditions will play no particular role in our analysis, and we therefore prefer to overlook them here.

\end{remark}
\begin{remark}
The non-convex side condition on the plastic slip states that, at almost every point, at most one slip-plane normal can be active, while the lower-bound on $E_\pl$ ensures that $E$ is sufficiently coercive to ensure the weak-closedness of the single-plane condition along minimising sequences.

\end{remark}

\begin{remark}
Since $s_j\in m_j^{\perp}$, we have $F_\pl^{-1}(x) = \Id - s_{j(x)}\otimes m_{j(x)}$ almost everywhere for finite-energy test functions, which will simplify the analysis of minors of the elastic deformation in the sequel.
\end{remark}

\begin{remark}
The quantities $\abs{\nabla_{m_j^\perp} s_j}$ appearing in the lower bound on $E_{\mathrm{pl}}$ are to be interpreted as measures.
\end{remark}

We also prove the following analogue of the above result for linearised elasticity, with a weaker requirement on the hardening.

\begin{theorem}\label{thm:main_lin}
Let $\Omega \subset \R^3$ be a bounded, Lipschitz domain, and let $p>1$. Let $m_j$, for $j=1,\ldots N$, be a family of slip normals, and $s_j\in m_j^{\perp}$ the corresponding slip vectors. Further suppose that the single-plane plastic energy, $E_\pl$, is w.l.s.c. in $(L^p(\Omega))^N$, and satisfies 
\[
E_\pl\left(\{s_j\}_{j=1}^N\right)\geq C\left(\sum_{j=1}^N \|{s_j}\|_{L^p(\Omega)}^p + \sum_{j=1}^N\int_{\Omega}\abs{\nabla_{m_j^\perp} s_j}\right),
\]
for some constant $C>0$.

Then, assuming a linearised elastic energy of the form
\[
E_{\mathrm{el}}\left(u, \{s_j\}_{j=1}^N\right):= \int_{\Omega} \left|(\nabla u - \beta)_{\mathrm{sym}}\right|^2 dx,
\]
where the plastic-distortion tensor is given by $\beta = \sum_{j=1}^N s_j \otimes m_j$ and $u$ is the displacement, the single-plane elastoplastic energy
\[
E_{\mathrm{lin}}\left(u, \{s_j\}_{j=1}^N\right) =  \left\{\begin{array}{ccl}
 E_{\mathrm{el}}\left(u, \{s_j\}_{j=1}^N\right) + E_\pl\left(\{s_j\}_{j=1}^N\right) & : & \abs{s_j}\abs{s_i} = 0~\textrm{a.e., for}~i\neq j\\
 & & \\
 +\infty & : & \mathrm{otherwise}
\end{array} \right.,
\]
admits a minimiser $\left(u^{\ast},\{s^{\ast}_j\}_{j=1}^N\right)$ in the class $(W^{1,p}(\Omega))^3\times (L^p(\Omega))^N$.
\end{theorem}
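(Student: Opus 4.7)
The plan is to apply the direct method of the calculus of variations, with the work substantially lightened compared with Theorem~\ref{thm:main} by the fact that the elastic energy is convex quadratic, obviating any need for polyconvexity, multiplicative decomposition, or div--curl machinery. Fix a minimising sequence $(u_n, \{s_{j,n}\}_{j=1}^N)$; the infimum is finite, since $(0,\{0\})$ provides an admissible competitor with finite energy (up to a standard rigid-motion normalisation), so $E_{\mathrm{lin}}(u_n, \{s_{j,n}\})$ is uniformly bounded.

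From the coercivity of $E_\pl$, each $s_{j,n}$ is uniformly bounded in $L^p(\Omega)$ and the total variation $|\nabla_{m_j^\perp} s_{j,n}|(\Omega)$ is uniformly bounded, so $\beta_n := \sum_j s_{j,n} \otimes m_j$ is bounded in $L^p(\Omega;\R^{3\times 3})$. From the elastic term, $(\nabla u_n - \beta_n)_{\mathrm{sym}}$ is bounded in $L^2$. Combining these two bounds and appealing to Korn's inequality (after standard rigid-motion normalisation, e.g.\ $\int_\Omega u_n = 0$ and $\int_\Omega \mathrm{skew}(\nabla u_n) = 0$) yields a uniform bound on $\nabla u_n$ in $L^{\min(p,2)}(\Omega)$. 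Passing to subsequences, I may assume $s_{j,n} \wk s_j^{\ast}$ in $L^p$ and $u_n \wk u^{\ast}$ in $W^{1,\min(p,2)}$, so that $\beta_n \wk \beta^{\ast} := \sum_j s_j^{\ast} \otimes m_j$ in $L^p$ and, by uniqueness of weak limits, $(\nabla u_n - \beta_n)_{\mathrm{sym}} \wk (\nabla u^{\ast} - \beta^{\ast})_{\mathrm{sym}}$ weakly in $L^2$.

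Lower-semicontinuity of the elastic term is then immediate, $E_{\mathrm{el}}$ being the square of the $L^2$-norm of a sequence that converges weakly in $L^2$, and lower-semicontinuity of $E_\pl$ holds by hypothesis. The only genuinely non-routine step, which I expect to be the principal obstacle, is showing that the non-convex single-plane side condition $|s_i^{\ast}||s_j^{\ast}| = 0$ a.e.\ for $i\neq j$ is preserved in the weak $L^p$-limit. For this, I would invoke the 3-d exclusion-lemma argument adapted from Conti--Ortiz that is developed in the proof of Theorem~\ref{thm:main}: since that argument depends only on the $L^p$-bound on $s_{j,n}$ and on the measure-valued control of $\nabla_{m_j^\perp} s_{j,n}$---both provided here by the coercivity of $E_\pl$---and is entirely independent of the form of the elastic energy, it transfers verbatim to the linearised setting. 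Putting the three lower-semicontinuity ingredients together yields $E_{\mathrm{lin}}(u^{\ast},\{s_j^{\ast}\}) \le \liminf_n E_{\mathrm{lin}}(u_n,\{s_{j,n}\})$, so $(u^{\ast},\{s_j^{\ast}\})$ is a minimiser in the required class, completing the proof.
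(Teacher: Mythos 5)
Your proposal is correct and follows essentially the same route as the paper, whose proof of Theorem~\ref{thm:main_lin} consists precisely of the observation that the restriction $p>2$ in Theorem~\ref{thm:main} was needed only to control the minors of the nonlinear elastic deformation, so that the direct method goes through using convexity of the quadratic elastic term together with Lemmas~\ref{lem:main}--\ref{lem:int_min}, which, exactly as you note, depend only on the $L^p$-bound and the measure-valued curl bound supplied by the coercivity of $E_\pl$ and not on the form of the elastic energy. Your explicit invocation of Korn's inequality with a rigid-motion normalisation simply fills in compactness details that the paper leaves implicit.
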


\begin{remark}
Unfortunately, one cannot expect an analogous existence result for $p=1$, corresponding to rate-independent dissipation, even in the case of linearised elasticity, due to the possibility of slip concentration (formation of singular measures) along minimising sequences, and the difficulty of reconciling this with an intuitive interpretation of the single-plane side condition for measures -- see Example \ref{ex:2} at the end of the paper.
\end{remark}

The force of our existence results is that one should always seek to relax a non-convex single-slip condition in single-crystal strain-gradient plasticity to a single-plane condition. In this way, one obtains a well-posed model which, in particular, is not plagued by the kind of fine oscillations which have often been observed in simulations of single-slip models. It is also worth emphasising that both the single-plane condition {\em and} the regularising penalisation of geometrically necessary dislocations which appear in the definition of $E$, acting in concert, are essential ingredients in proving these results.

The article is organised as follows. In Section~\ref{sec:model}, we introduce a non-convex (single-slip) model for single-crystal, strain-gradient plasticity, including a very particular penalisation of geometrically necessary dislocations which prevents the cancellation of dislocations at collinear-slip-patch interfaces. In Section~\ref{sec:relax}, we discuss how to relax the single-slip condition (the source of the non-convexity) in our model to a single-plane condition, resulting in a family of models to which Theorem \ref{thm:main} or \ref{thm:main_lin} applies. The mathematical heart of the paper is Section~\ref{sec:existence}, in which the proof of Theorems \ref{thm:main} and \ref{thm:main_lin} is to be found -- this consists of several lemmas which take care not only of the lower semi-continuity of the plastic and elastic parts of the single-plane energy, but also of the preservation of the single-plane condition along minimising sequences.
 
\section{A model for strain-gradient plasticity with cross hardening} \label{sec:model}

We now introduce, term-by-term, the elements of our continuum crystal-plasticity model. Modulo a very specific choice for the penalisation of geometrically necessary dislocations, which is essential for handling the non-convex slip conditions, the model ingredients are standard fare in the continuum-plasticity literature. Note that a longer version of this discussion appeared in the review \cite{AD_book}.
\subsection{Plastic deformation.} 
We consider an elasto-plastic body with its reference configuration $\Omega \subset \R^3$ and a sufficiently smooth deformation 
$$
y\colon \Omega \to \R^3,
$$
satisfying suitable boundary conditions -- for example, Dirichlet conditions on a subset of $\p\Omega$. We then define the deformation gradient $F = \nabla y$, noting that its row-wise curl necessarily vanishes.

Now we make the assumption that this deformation gradient can be decomposed into a product of plastic shears due to an atomistic rearrangement, $F_\pl$, which generates an intermediate configuration, followed by an elastic deformation, $F_\el$. We remark that the validity of this multiplicative decomposition is still a matter for debate -- see~\cite{Davoli_15,Reina_16} for two recent contributions to the discussion. 

With this  {\em Lee-Liu} decomposition~\cite{Lee_67} in hand, we can thus identify an elastic energy for our crystalline specimen depending only on the elastic strain, $F_\el = FF_{\pl}^{-1}$. A further assumption is that the plastic energy, in the sense of the implicit time discretisation (see above), can be written as a function of the incremental change in plastic strain. Thus, restricting ourselves to the first such time-step, we see that the incremental deformation requires an energy input of the form
$$
\int_\Omega W_\el(FF_\pl^{-1})\dx + E_{\pl}(F_\pl),
$$ 
for a suitable frame-indifferent elastic energy density $W_\el$, and a plastic energy $E_\pl$. This plastic energy contains a $p$-hardening term (or dissipation, in the case $p=1$) that penalises the $L^p$-norm of the plastic shear undergone by the crystal. In the following $E_\pl$ will furthermore be allowed to depend, amongst other things, on a (possibly singular) measure-valued functional of $F_\pl$ which takes account of geometrically necessary dislocations.

\subsection{Cross-hardening.}
Cross-hardening (or {\em latent} hardening)~\cite{WU:1991ij,KOCKS:1964te,FRANCISCO:1980bx} describes  the phenomenon whereby shear in one slip system suppresses activity in other slip systems at the same point in the crystal. This leads to a loss of convexity in the plastic energy $E_\pl$ introduced above~\cite{ortiz} -- roughly speaking, $E_\pl(F_\pl)$ will be locally minimal if $F_\pl$ is a simple shear in one of the given slip systems of the crystal, and, for general boundary constraints, one will have to dip into more than one of these local energy wells to minimise the energy globally. 

Here, we will make the simplifying assumption of {\em infinite} cross hardening, meaning that $F_\pl$ is required to be in single slip at each point. In line with this, it is thus assumed that the crystallographic structure admits a set of slip-plane normals $\M=\{m_j\}_{j=1}^N$, each with a given set of Burgers' vectors $\B_j = \{b_{ij}\}_{i=1}^{K(j)}$, and that $F_\pl$ takes the form.
\be\label{eq:fpl}
F_\pl = \Id + \sum_{j=1}^N \sum_{i=1}^{K(j)} c_{ij} m_j \otimes b_{ij},
\ee 
subject to the following
\be
\mathrm{\textbf{Single-Slip Condition (SSC)}}:~c_{ij}(x)c_{kl}(x) = 0\quad\mathrm{for~a.e.}~x\in\Omega,\quad\mathrm{if}\quad i\neq k\quad\mathrm{or}\quad j\neq l.
\ee

Note that, under this condition, the product of simple shears assumed above simplifies immediately, such that there is at most one non-zero factor at almost every point, thus justifying the representation as a sum and still ensuring that $\det F_\pl =1$ almost everywhere. Furthermore, the plastic hardening can be written in terms of the slip coefficients $c_{ij}$.

\subsection{Geometrically necessary dislocations.}

A strain-gradient penalty term is sometimes included in models of crystal plasticity, since an argument can be made that the surface where two differently sheared subdomains meet admits a density of geometrically necessary dislocations. In~\cite{Cermelli_2001} it is proposed that the correct term for this density of geometrically necessary dislocations must be 
\begin{equation}
\label{eq:curl_nonlin}
\frac{1}{\det F_\pl} (\Curl F_\pl) F_\pl^\T,
\end{equation}
(see also~\cite{MM} for a brief discussion of this matter). Here, the expression $\Curl$ denotes the row-wise curl of a matrix.

Considering the fact that our single-slip side condition yields a very specific form of $F_\pl$, it is easy to see that both the volumetric term and multiplication with $F_\pl^T$ in (\ref{eq:curl_nonlin}) are equal to identity. The GND-density therefore reduces to the simpler form
\be\label{eq:curl_lin}
\Curl F_\pl.
\ee
The expression above, however, does not account for sessile dislocations at boundaries between abutting subdomains deformed in collinear slip due to cancellation of dislocations with opposite sign. This is in disagreement with simulations by Devincre \textit{et. al.}~\cite{Devincre1, Devincre2} who observe that these cancellations are in practice not complete, such that a density of dislocations remains on the surface between the subdomains -- for a discussion of this matter in a simplified scalar model, see~\cite[Chapter 4]{co}. In order to exclude such collinear cancellations, we thus introduce a non-standard (possibly singular) measure for the dislocation density, which, defining the single-plane slips $ s_j = \sum_{i=1}^{K(j)} c_{ij}b_{ij}$, concretely takes the form
\be\label{mod_curl}
\G\left(\{s_j\}_{j=1}^{N}\right) = \sum_{j=1}^N|\nabla_{m_j^\perp} s_j|,
\ee
i.e., for the $j$-th slip normal, we take the length of the planar-gradient vector (gradient orthogonal to the respective $m_j$) of the plastic slip $s_j$, regardless of any activity in the other slip planes, and then sum over $j$.

\subsection{The model}

To summarise, the geometrically nonlinear elasto-plastic energy we use to model the phenomena above in the case of $L^p$-hardening, is taken to be

\be
\mathcal{E}^p\left(y, \{c_{ij}\}\right) =
\left\{
\begin{array}{ccl}
	\int_\Omega W_{\textrm{el}}(F_\el)\,\dx  + 
	\sigma\int_\Omega\G(\{s_j\}) + \tau\sum_{i=1}^{K(j)}\sum_{j=1}^N\int_\Omega |c_{ij}|^p\,\dx  &:& \text{if \textbf{(SSC)} holds},\\
	& & \\
	+\infty  &:&  \text{otherwise}, 
	\label{nonlinear}
\end{array}
\right.
\ee
where $W_{\textrm{el}}$ satisfies the growth, convexity and indifference conditions appearing in Theorem \ref{thm:main}, while the corresponding geometrically linear version of the model, with $\beta = \sum_{j=1}^N s_j \otimes m_j$, is
\be
\mathcal{E}^p_{\mathrm{lin}}\left(y, \{c_{ij}\}\right) =
\left\{
\begin{array}{lcl}
	\int_\Omega\left|(\nabla y - \beta)_{\mathrm{sym}}\right|^2\,\dx  + 
	\sigma\int_\Omega\G(\{s_j\}) & &\\
	& & \\
+ \tau \sum_{i=1}^{K(j)}\sum_{j=1}^N\int_\Omega |c_{ij}|^p\,\dx &:& \text{if \textbf{(SSC)} holds},\\
	& & \\
	+\infty  &:&  \text{otherwise}, 
	\label{linear}
\end{array}
\right.
\ee

As we showed in \cite{ang}, neither of these energy functionals (or indeed their analogues with, instead, a finite hardening matrix) are weakly lower-semicontinuous, due to the possibility of fine oscillations between multiple Burgers vectors with the same slip normal, and this leads to the search for a relaxed functional and a corresponding existence theorem.
\subsection{Unique decomposition of $F_{\mathrm{pl}}$ into single-plane slips}
Under certain conditions on the family of slip-normals $m_j$, there is in fact a one-to-one correspondence between $F_{\mathrm{pl}} = \mathrm{Id} + \sum_{j=1}^N s_j \otimes m_j$ and the slips $s_j$. Thus, suppose $N\in\{1,2,3,4\}$, and let $\M=\{m_j\}_{j=1}^N$ be a collection of slip-plane normals with the property that any collection of three or fewer vectors in $\M$ is linearly independent\footnote{Note that this condition is satisfied by the four slip planes of the f.c.c. crystal structure, as well as by the low-temperature slip modes (on the basal and prismatic planes) of h.c.p. crystals.}.

For each $j\in \{1,\dots, N\}$, denote by $B_j$ the space of matrices spanned by $\{s\otimes m_j : s \perp m_j \}$. The following Proposition shows that the decomposition of any traceless matrix into a linear combination of active slips, i.e. elements of $B_j$, is unique.
\begin{proposition} \label{prop:unique_decomp}
Suppose that $\beta=\sum_{j=1}^N s_j \otimes m_j$, where $s_j\in \R^3$, $s_j \perp m_j$ and $m_j\in\M$. Then, if the independence-condition on $\M$ given above holds, this slip-plane decomposition (i.e. the determination of the $s_j$) is unique.
\end{proposition}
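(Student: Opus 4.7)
The plan is to reduce the uniqueness claim to an injectivity statement: if $\beta = \sum_{j=1}^N t_j\otimes m_j = 0$ with $t_j\perp m_j$, then all $t_j=0$. (Apply this to the difference of two putative decompositions of the same $\beta$.) The key observation is that, viewed as a linear map, $\beta v = \sum_{j=1}^N (m_j\cdot v)\,t_j$, so $\beta = 0$ is equivalent to the identity $\sum_{j=1}^N (m_j\cdot v)\,t_j = 0$ for every $v\in\R^3$, which can be tested against cleverly chosen $v$.

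For the cases $N\le 3$ the argument is essentially linear-algebraic and straightforward: given $k\in\{1,\dots,N\}$, pick $v$ in the intersection of the orthogonal complements $m_j^\perp$ for $j\ne k$. The independence hypothesis on $\M$ guarantees that this intersection is non-trivial (it has dimension $3-(N-1)\ge 1$) and is not contained in $m_k^\perp$ (since $\{m_1,\dots,m_N\}$ is itself linearly independent), so we can choose $v$ with $m_k\cdot v\neq 0$. Substituting such $v$ into the identity above collapses the sum and yields $t_k=0$. Iterating over $k$ disposes of the cases $N=1,2,3$.

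The main obstacle is $N=4$, where three linearly independent normals in $\R^3$ already span the whole space, so one can no longer annihilate three of the four terms by choice of $v$. Here the plan is to pick $v\in m_i^\perp\cap m_k^\perp$ with $i\neq k$, giving a one-dimensional line along which $m_l\cdot v\ne 0$ for $l\notin\{i,k\}$ (again by the independence assumption), and obtain a linear relation of the form $(m_l\cdot v)\,t_l + (m_{l'}\cdot v)\,t_{l'} = 0$, hence $t_l$ is a scalar multiple of $t_{l'}$. Applying this for the three pairs obtained by fixing, say, $l'=4$ shows, provided $t_4\ne 0$, that $t_j=\lambda_j t_4$ for $j=1,2,3$ with $\lambda_4=1$, and then $\beta = t_4\otimes(\sum_{j=1}^4\lambda_j m_j) = 0$ forces $\sum_{j=1}^4\lambda_j m_j = 0$.

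It remains to invoke the perpendicularity constraints $t_j\perp m_j$, which read $\lambda_j(t_4\cdot m_j)=0$. If some $\lambda_j=0$ with $j\le 3$, say $\lambda_1=0$, the relation $\lambda_2 m_2+\lambda_3 m_3+m_4=0$ contradicts the linear independence of $\{m_2,m_3,m_4\}$. Otherwise all $\lambda_j\neq 0$, which forces $t_4\cdot m_j=0$ for $j=1,2,3$; but $\{m_1,m_2,m_3\}$ spans $\R^3$, whence $t_4=0$, contradicting the standing assumption. Therefore $t_4=0$, and the $N=3$ case applied to the remaining three terms finishes the argument.
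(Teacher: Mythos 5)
Your proof is correct, but it takes a genuinely different route from the paper's. The paper contracts the identity $\sum_{j=1}^4 s_j\otimes m_j=0$ on the \emph{first} (slip) slot with each normal $m_k$: the constraint $s_k\perp m_k$ immediately removes the $k$-th term, leaving $\sum_{j\neq k}(s_j\cdot m_k)\,m_j=0$, whence linear independence of any three normals gives $s_j\cdot m_k=0$ for all $j\neq k$; since any three of the $m_k$ span $\R^3$, every $s_j$ vanishes — a four-line argument with no case distinctions, and with $N<4$ dismissed as trivial. You instead test on the \emph{second} (normal) slot with vectors $v$ chosen in intersections of the $m_j^\perp$: for $N\le 3$ this annihilates all but one term (an explicit treatment of the cases the paper waves through), while for $N=4$ it yields only the pairwise proportionalities $t_l=\lambda_l t_4$, after which you must reassemble $\beta$ as the rank-one tensor $t_4\otimes\bigl(\sum_j\lambda_j m_j\bigr)$ and split into cases according to whether some $\lambda_j$ vanishes. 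Every step checks out: the nonvanishing of $m_l\cdot v$ along the line $m_i^\perp\cap m_k^\perp$ is exactly the three-vector independence hypothesis, and both branches of your case analysis correctly contradict either that hypothesis or the standing assumption $t_4\neq 0$. What the paper's contraction buys is symmetry — the orthogonality constraints enter at the very first step, uniformly in $k$, so no rank-one reduction or case analysis is needed; what your version buys is a constructive handling of the small-$N$ cases and a geometrically transparent view of where the independence condition is used (non-degeneracy of the lines $m_i^\perp\cap m_k^\perp$), at the cost of a longer argument in which the perpendicularity constraints are deferred to the final step.
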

\begin{proof}
For $N<4$, the result holds trivially. Suppose, therefore, that $N=4$, and that the claim is false. Then there exist $s_j$ $(\perp m_j)$, for  $j= 1, \dots, 4$, not all zero, such that 
\begin{equation} \label{eq:decomposition}
\sum_{j=1}^4 s_j \otimes m_j = 0.
\end{equation}
If we now take the scalar product of~\eqref{eq:decomposition} with some $m_k$, where $k\in\{1,\dots, 4\}$, on the first factor in the dyadic product, we get
\begin{align*}
0 &= \sum_{j=1}^4 (s_j\cdot m_k) m_j \\
&= \sum_{j\neq k} (s_j\cdot m_k) m_j.
\end{align*}
By the linear independence of any three $m_j$, we have $s_j \cdot m_k =0$ for all $j\neq k$. Since $k$ was arbitrary, we have $s_j= 0$ for $j = 1, \dots, 4$, a contradiction.
\end{proof}

\section{Single-plane relaxation of the single-slip condition} \label{sec:relax}

We now investigate the single-slip-to-single-plane relaxation of (\ref{nonlinear}). In other words, instead of taking the \textbf{single-slip condition}:
\be\label{strain}
F_\pl(x) = \Id+s(x)\otimes m(x),
\ee
with
\be\label{ssc}
m(x) = m_{j(x)}\in\mathcal{M} \text{ and } s(x)\in\bigcup_{i=1}^{K(j(x))} \operatorname{Span}\{b_{ij(x)}\}
\ee
almost everywhere, we simply enforce the \textbf{single-plane condition}, also referred to as the \textbf{relaxed slip condition (RSC)}, which says that (\ref{strain}) holds subject to
\be\label{rsc}
m(x) = m_{j(x)}\in\mathcal{M} \text{ and } s(x)\in\operatorname{Span}\bigcup_{i=1}^{K(j(x))} \{b_{ij(x)}\},
\ee
and look for an optimal (with respect to the energy (\ref{nonlinear}) or (\ref{linear})) single-slip approximation to a given single-plane test function. 

Suppose, then, that we have a displacement $u\in W^{1,q}$ on $\Omega\subset\mathbb{R}^3$, $q\geq 1$, satisfying a Dirichlet condition on a part of $\p\Omega$, and a relaxed plastic strain $F_\pl$ satisfying (\ref{strain}) and (\ref{rsc}), and for each slip-plane normal, $m_j$, suppose we make a fixed choice of two admissible Burgers vectors, $b_{1j}, b_{2j}$, such that
\be\label{eq:decomp}
F_\pl = \sum_{j=1}^N s_j\otimes m_j \qquad \textrm{and} \qquad s_j = \sum_{i=1}^2 c_{ij}b_{ij}.
\ee

With such a choice in hand, we aim to identify the single-plane relaxation of the single-slip energy (\ref{nonlinear}) by weakly approximating $F_\pl$ with single-slip  laminates, such that the slip direction alternates between $b_{1j}$ and $b_{2j}$ as we pass from slice to slice. In the case of $L^1$-hardening, the correct single-plane energy was already calculated explicitly in \cite{ang}, modulo a technical issue relating to the smoothness of $F_\pl$, while for hardening with $p>1$ no such explicit formula seems to be available -- in the latter case, we will instead make do with finding matching upper and lower bounds for the single-slip-to-single-plane relaxation.

\subsection{$L^1$-hardening} Suppose $p=1$. Then, for any admissible selection of Burgers vectors as above, the following functional is, morally, the relaxation of the energy $\mathcal{E}^1$:
\be\label{rel}
\mathcal{E}^1_{\mathrm{rel}}\left(y, \{s_j\}_{j=1}^N\right) = \left\{
\begin{array}{ccl}
	\int_{\Omega} W_{\textrm{el}}(F^{\mathrm{el}})\dx + \sigma\int_{\Omega}\mathcal{G}_{\mathrm{lam}}\left(\{s_j\}_{j=1}^N\right) + \tau\int_{\Omega}|F_\pl|_{\mathrm{lam}}\dx & : & \textbf{(RSC)}~\textrm{holds},\\
	& & \\
	+\infty & : & \textrm{otherwise},
\end{array}
\right.
\ee
where the {\em laminated curl} is defined by
\be
\mathcal{G}_{\mathrm{lam}}\left(\{s_j\}_{j=1}^N\right) = \sum_{j=1}^N\sum_{i=1}^2|\nabla_{m_j^{\perp}}c_{ij}|,
\ee
and the {\em laminated hardening} by
\be
|F_{\pl}|_{\textrm{lam}} = \sum_{j=1}^N\sum_{i=1}^2 |c_{ij}|.
\ee

The justification for taking (\ref{rel}) as our expression for the relaxed energy is, as we showed in \cite{ang}, that  smooth, relaxed slips $s_j$ can be approximated by  laminated single-slips $s_j^n$, such that
\be
\int_{\Omega}\mathcal{G}(\{s_j^n\}_{j=1}^N)\ra\int_{\Omega}\mathcal{G}_{\mathrm{lam}}(\{s_j\}_{j=1}^N),
\label{lam_curl}
\ee
and
\be
\sum_{j=1}^N\sum_{i=1}^2\int_{\Omega} |c_{ij}^n|\dx\ra\int_{\Omega}|F_{\pl}|_{\mathrm{lam}}\dx,
\ee
as $n\ra\infty$, and, moreover, that $W_{\textrm{el}}$ behaves continuously under lamination of $F_\pl$.

In particular, our main theorem from \cite{ang} is
\begin{theorem}\label{thm:nonlinear}
Suppose that $W_{\textrm{el}}:\mathbb{M}^{3\times 3}\mapsto [0,\infty)$ is continuous and satisfies the $q$-growth condition
\be\label{W}
-k_1 + k_2|F|^q \le W_{\textrm{el}}(F) \le K_1 + K_2|F|^q,
\ee
for some positive constants $k_1,k_2,K_1,K_2$ and $q\geq 1$. Suppose, furthermore, that we have a Lipschitz domain $\Omega\subset\mathbb{R}^3$ and a test function $(u,\{s_j\}_{j=1}^N)$ defined on $\Omega$, such that $u\in W^{1,q}$ satisfies a Dirichlet condition on a Lipschitz subset of $\p\Omega$, $F_\pl$ satisfies \textbf{(RSC)} with the $j$-th slip normal active only on $\Omega_j\subset\Omega$, and the relaxed energy (\ref{rel}) is finite. Assume  that the sets $\{\Omega_j\}_{j=1}^N$, on which $F_\pl= \textrm{Id} + s\otimes m_j$, satisfy the regularity condition $\mathcal{H}^2(\p\Omega_j\setminus\mathcal{F}\Omega_j)=0$.
	
Then, for each $\ep>0$, there exists a pair of test functions $(u_{\ep}, s_{\ep})$ satisfying the same Dirichlet condition and \textbf{(SSC)}, such that $u_{\ep}\rightharpoonup u\in W^{1,1}$, $s_{\ep}\rightharpoonup s\in L^1$ and
	\be
	\mathcal{E}^1(u_{\ep},s_{\ep}) \leq \mathcal{E}^1_{\mathrm{rel}}(u,s) + \ep. 
	\ee
\end{theorem}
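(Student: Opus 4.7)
The plan is to build the recovery sequence by fine lamination within each single-plane subdomain $\Omega_j$, interchanging between the two chosen Burgers vectors $b_{1j}, b_{2j}$ in the slip-plane $m_j^\perp$, and then to correct the deformation so that the elastic strain is essentially preserved along the construction. A preliminary step is to reduce, via mollification respecting the decomposition $\Omega = \bigcup_j \Omega_j$, to the case where each coefficient $c_{ij}$ is smooth on a slightly shrunken version of $\Omega_j$; the regularity hypothesis $\mathcal{H}^2(\partial\Omega_j \setminus \mathcal{F}\Omega_j)=0$ is precisely what lets us do this mollification and then discard a boundary layer whose contribution to all three terms of $\mathcal{E}^1_{\mathrm{rel}}$ can be made arbitrarily small, thereby reducing the problem to the interior of each $\Omega_j$ separately.

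Inside a fixed $\Omega_j$, I would pick a unit vector $\nu_j \in m_j^\perp$ and, for a small parameter $\delta>0$, partition $\Omega_j$ into thin slabs orthogonal to $\nu_j$ of width $\delta$, splitting each slab further into two sub-slabs of relative widths $\lambda_{1j}(x), \lambda_{2j}(x)$ depending smoothly on a macroscopic coordinate, where $\lambda_{ij}$ encodes the local weighting so that the weak limit of the oscillating slip indicator returns the prescribed $c_{ij}$. On the sub-slab labelled by $i$ the slip is set to $s^\delta_j = c_{ij}/\lambda_{ij}\cdot b_{ij} \otimes m_j$, in pure single-slip with normal $m_j$ and Burgers vector $b_{ij}$. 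The displacement $u_\delta$ is then corrected by adding a fine-scale corrector $v_\delta$, piecewise affine in $\nu_j$ within each slab, chosen so that $\nabla u_\delta - \nabla u = F^\delta_{\mathrm{pl}} - F_{\mathrm{pl}}$ up to a vanishing residue; since the jump vectors across the slab interfaces lie in $\operatorname{Span}\{b_{1j},b_{2j}\}$ and the interface normals are $\nu_j \in m_j^\perp$, this corrector can be chosen continuous, with $v_\delta \to 0$ uniformly and $u_\delta \rightharpoonup u$ in $W^{1,q}$. Consequently $F^\delta_{\mathrm{el}} = \nabla y_\delta (F^\delta_{\mathrm{pl}})^{-1}$ stays equal to $F_{\mathrm{el}} = \nabla y F_{\mathrm{pl}}^{-1}$ up to a small perturbation, so the $q$-growth and continuity of $W_{\mathrm{el}}$, combined with the Vitali/Lebesgue dominated convergence theorem, give convergence of the elastic-energy term.

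For the two genuinely plastic terms, the $L^1$-hardening contribution equals $\int \sum_j (\lambda_{1j}|c_{1j}|/\lambda_{1j} + \lambda_{2j}|c_{2j}|/\lambda_{2j})\,dx = \int |F_{\mathrm{pl}}|_{\mathrm{lam}}\,dx$ up to $O(\delta)$, while the $\Curl$-penalty splits into a volume part (from the smoothly varying coefficients inside each sub-slab) and an interface part. On the interfaces between $i=1$ and $i=2$ sub-slabs, the jump $[s^\delta_j]$ is proportional to a combination of $b_{1j}, b_{2j}$, both in $m_j^\perp$, so the associated measure-theoretic $\nabla_{m_j^\perp} s^\delta_j$ accumulates on a scale $1/\delta$ set of surfaces of total area $|\Omega_j|/\delta$, each contributing a mass proportional to $\delta|[s^\delta_j]|$; a short computation shows the total converges to $\sum_j\sum_i\int_{\Omega_j}|\nabla_{m_j^\perp} c_{ij}|$, which is exactly $\int \mathcal{G}_{\mathrm{lam}}(\{s_j\})$, as in~\eqref{lam_curl}. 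This relies crucially on the orientation of the lamination normal $\nu_j$ lying in $m_j^\perp$, so that lamination produces only ``admissible'' contributions to the planar GND density and no spurious out-of-plane dislocation content is created.

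The main obstacle, and the part requiring the regularity hypothesis on $\partial\Omega_j$, is the treatment of the interfaces between distinct subdomains $\Omega_j, \Omega_k$ with $j\neq k$: there the plastic strain jumps between different slip planes, and one must show that the correction $v_\delta$ extends continuously across these junctions, that the Dirichlet boundary data is preserved, and that no extra dislocation mass is introduced beyond what is already accounted for by $\nabla_{m_j^\perp} s_j + \nabla_{m_k^\perp} s_k$ in $\mathcal{G}_{\mathrm{lam}}$. The argument here is to carry out the lamination in each $\Omega_j$ away from a $\delta'$-neighbourhood of $\partial\Omega_j \cap \Omega$, glue to the original $(u, s)$ in this neighbourhood via a standard cut-off, and use the finite-perimeter assumption to estimate the extra mass by $\mathcal{H}^2(\mathcal{F}\Omega_j)\cdot \delta' \|s\|_\infty$, which tends to zero as $\delta'\to 0$. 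Choosing $\delta, \delta'$ sufficiently small in terms of $\epsilon$ then yields the claimed inequality.
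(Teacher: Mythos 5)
Your construction fails at its core step: the choice of lamination direction. You laminate with slab normal $\nu_j\in m_j^\perp$, so the slip jumps across interfaces whose normal lies \emph{inside} the slip plane, and exactly these jumps are charged by the dislocation measure $|\nabla_{m_j^\perp}s_j|$: each interface is a full cross-section of $\Omega_j$ of area $O(1)$ (not $O(\delta)$), the jump $\bigl|\lambda_{1j}^{-1}c_{1j}b_{1j}-\lambda_{2j}^{-1}c_{2j}b_{2j}\bigr|$ is $O(1)$, and there are $O(1/\delta)$ interfaces, so the interfacial contribution to $\int\mathcal{G}(\{s_j^\delta\})$ diverges like $1/\delta$ rather than converging to $\int\mathcal{G}_{\mathrm{lam}}(\{s_j\})$; your claim that each interface contributes mass $\sim\delta|[s_j^\delta]|$ miscounts by a factor of order $\delta^2$. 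The orientation you call ``crucial'' is precisely backwards. The point of the modified curl \eqref{mod_curl} is that it is blind to variations of $s_j$ \emph{along} $m_j$ (for $F_\pl=\Id+s\otimes m$ the curl sees only $\nabla_{m^\perp}s$), so the correct recovery sequence -- the one used in the paper, see \eqref{slice} and the proof of Theorem 5.1 in \cite{ang} -- stacks bi-layers \emph{parallel to the slip plane} $m_j^\perp$, alternating between $b_{1j}$ and $b_{2j}$ as one moves in the $m_j$-direction: then the interfacial jumps cost nothing, and the surviving dislocation content is just the in-plane variation of the coefficients within each layer, which is what produces $\mathcal{G}_{\mathrm{lam}}$ in the limit (for $p=1$ the $1$-homogeneity of hardening and curl also makes the weights $\lambda_{ij}$ irrelevant, consistent with your hardening computation).

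Your displacement corrector fails for the same geometric reason. A continuous $u_\delta$ whose extra gradient $\nabla u_\delta-\nabla u$ is piecewise constant across planes with normal $\nu_j$ must, by the Hadamard jump condition, have gradient jumps of the rank-one form $a\otimes\nu_j$; but $F_\pl^\delta$ jumps across your interfaces by $(\Delta s)\otimes m_j$ with $m_j\perp\nu_j$, which is never of that form, so no continuous corrector with $\nabla u_\delta-\nabla u=F_\pl^\delta-F_\pl$ (even up to a vanishing residue) exists -- your parenthetical justification via ``jump vectors in $\operatorname{Span}\{b_{1j},b_{2j}\}$'' conflates the jump vector with the required dyadic structure. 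With the paper's orientation the jump is $(\Delta s)\otimes m_j$ across interfaces of normal $m_j$, which \emph{is} rank-one compatible and is exactly what permits the continuous zig-zag perturbation of $u$ constructed in \cite{ang}, keeping $F_{\mathrm{el}}$ essentially unchanged and the elastic energy continuous under the $q$-growth \eqref{W}. Your preliminary mollification step and the boundary-layer gluing under $\mathcal{H}^2(\p\Omega_j\setminus\mathcal{F}\Omega_j)=0$ are in the right spirit, but the two errors above sit at the heart of the construction and cannot be repaired while keeping the lamination normal in $m_j^\perp$.
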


\begin{remark}
The same approximation result also holds for linearised elasticity
\end{remark}

\begin{remark}
In cases where Proposition \ref{prop:unique_decomp} applies, we could equally well write $\mathcal{E}^1_{\mathrm{rel}}$ as a function of $F_{\mathrm{pl}}$ or of the $c_{1j}, c_{2j}$.
\end{remark}

Unfortunately, despite this nice characterisation of the l.s.c. envelope, an existence result for (\ref{rel}) remains elusive, since, in particular, $L^1$-control of the plastic slip is not sufficient to enforce weak-continuity of determinants of the elastic deformation in the case of nonlinear elasticity. Moreover, even for linearised elasticity, there is a problem with the relaxed side condition \textbf{(RSC)} for $p=1$, since one does not have enough coercivity to prevent slip concentration along minimising sequences (see Example \ref{ex:2}). In order to force the existence of minimisers, one can, however, add to $\mathcal{E}^1_{\textrm{rel}}$ a small, admittedly somewhat {\em ad hoc}, penalty which is bounded below by $\eps\sum_{i=1}^N\|s_i\|_p^p$ (with $p>2$ and $\epsilon> 0$) to $\mathcal{E}^1_{\textrm{rel}}$, such that the resulting energy satisfies the conditions of Theorem \ref{thm:main}. For convenience, we now state this result separately.

\begin{theorem}\label{thm:main_alt}
	Let $\Omega \subset \R^3$ be a bounded, Lipschitz domain, and suppose $p>2$, $\epsilon>0$. Let $m_j$, for $j=1,\ldots N$, be a family of slip normals, and $s_j\in m_j^{\perp}$ the corresponding slip vectors. Further suppose that $W_{\mathrm{el}}$ is a polyconvex, frame-indifferent elastic-energy density satisfying the growth condition 
	\[
	W_{\mathrm{el}}(A) \ge -c_1 + c_2\abs{A}^q\quad \textrm{with}\quad q>p/(p-2),
	\]
and that we are given a functional $\mathcal{F}_{\epsilon}(\{s_j\}_{j=1}^N)$ which is w.l.s.c. in $(L^p)^N$ and bounded below according to 
$\mathcal{F}_{\epsilon}(\{s_j\}_{j=1}^N)\geq\epsilon\sum_{i=1}^N\|s_j\|_p^p$, for some $\epsilon > 0$.
	
Then, writing $E_{\mathrm{el}}\left(y, \{s_j\}_{j=1}^N\right):= \int_{\Omega} W_{\mathrm{el}}\left(\nabla y\,F_\pl^{-1}\right)\dx$, where the plastic-deformation tensor is given by $F_\pl = \Id + \sum_{j=1}^N s_j \otimes m_j$, the regularised single-plane elastoplastic energy with $L^1$-hardening,
	\[
	\mathcal{E}^1_{\mathrm{reg}}\left(y, \{s_j\}_{j=1}^N\right) =  \left\{\begin{array}{ccl}
	\mathcal{E}^1_{\mathrm{rel}}\left(y, \{s_j\}_{j=1}^N\right) + \mathcal{F}_\eps & : & \abs{s_j}\abs{s_i} = 0~\textrm{a.e., for}~i\neq j\\
	& & \\
	+\infty & : & \mathrm{otherwise}
	\end{array} \right.,
	\]
	admits a minimiser $\left(y^{\ast},\{s^{\ast}_j\}_{j=1}^N\right)$ in the class $(W^{1,r}(\Omega))^3\times (L^p(\Omega))^N$, where $\frac{1}{r}=\frac{1}{p}+\frac{1}{q}$.
\end{theorem}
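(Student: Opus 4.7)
The plan is to deduce Theorem \ref{thm:main_alt} directly from Theorem \ref{thm:main} by recasting $\mathcal{E}^1_{\mathrm{reg}}$ in the general form treated there. The elastic part is literally identical, and the non-convex admissibility condition $|s_j||s_i|=0$ a.e.\ coincides with the single-plane condition \textbf{(RSC)} used in Theorem \ref{thm:main}, so the whole question reduces to verifying that
\[
E_{\pl}\!\left(\{s_j\}_{j=1}^N\right) := \sigma\!\int_{\Omega}\!\mathcal{G}_{\mathrm{lam}}\!\left(\{s_j\}_{j=1}^N\right) + \tau\!\int_{\Omega}\!|F_{\pl}|_{\mathrm{lam}}\,dx + \mathcal{F}_{\epsilon}\!\left(\{s_j\}_{j=1}^N\right)
\]
is weakly lower-semicontinuous in $(L^p(\Omega))^N$ and satisfies the coercivity lower bound required by Theorem \ref{thm:main}.

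The key auxiliary observation is that, since each $s_j$ lies in the two-dimensional plane $m_j^{\perp}$, the decomposition $s_j = c_{1j}b_{1j} + c_{2j}b_{2j}$ with respect to the fixed, linearly independent Burgers vectors $b_{1j},b_{2j}$ is given by a bounded linear map $s_j \mapsto (c_{1j},c_{2j})$. Consequently, weak $L^p$-convergence $s_j^n \weak s_j$ automatically transfers to weak $L^p$-convergence $c_{ij}^n \weak c_{ij}$ of the scalar coefficients. The $L^p$-coercivity $\sum_j \|s_j\|_{L^p}^p \leq \epsilon^{-1}\mathcal{F}_{\epsilon}$ is immediate from the assumed lower bound on $\mathcal{F}_{\epsilon}$, while the distributional identity $\nabla_{m_j^{\perp}} s_j = \sum_{i=1}^{2}(\nabla_{m_j^{\perp}}c_{ij})\,b_{ij}$ yields
\[
|\nabla_{m_j^{\perp}} s_j|(\Omega) \leq \max_i |b_{ij}|\sum_{i=1}^{2}|\nabla_{m_j^{\perp}}c_{ij}|(\Omega),
\]
so that the $\mathcal{G}_{\mathrm{lam}}$-term controls $\sum_j\int_{\Omega}|\nabla_{m_j^{\perp}} s_j|$ up to a multiplicative constant depending only on the chosen Burgers vectors.

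For the lower-semicontinuity, $\mathcal{F}_{\epsilon}$ is w.l.s.c.\ in $(L^p)^N$ by hypothesis; the $L^1$-hardening contribution $c \mapsto \|c\|_{L^1(\Omega)}$ is convex and continuous on $L^1(\Omega)$, hence w.l.s.c.\ on $L^p(\Omega)$; and the planar-gradient total-variation semi-norm $c \mapsto |\nabla_{m_j^{\perp}}c|(\Omega)$ is w.l.s.c.\ under distributional convergence of $c$, which follows \emph{a fortiori} from weak $L^p$-convergence. Summing over $i,j$ and invoking the weak-convergence transfer $s_j^n \weak s_j \Rightarrow c_{ij}^n \weak c_{ij}$ established above yields w.l.s.c.\ of $E_{\pl}$, and Theorem \ref{thm:main} then delivers the desired minimiser $(y^{\ast},\{s_j^{\ast}\})\in(W^{1,r}(\Omega))^3\times(L^p(\Omega))^N$. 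No deep obstacle is anticipated here: the only mildly technical point is the linear transfer of weak convergence from $s_j$ to $c_{ij}$, which is immediate. The real work lies upstream, in Theorem \ref{thm:main}, which handles the non-convex single-plane constraint, the div-curl continuity of the minors of $\nabla y\,F_{\pl}^{-1}$, and the polyconvex elastic lower-semicontinuity.
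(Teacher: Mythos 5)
Your proposal is correct and follows essentially the same route as the paper, which disposes of Theorem \ref{thm:main_alt} by reducing it to Theorem \ref{thm:main} via the (semi-)norm equivalence of $\mathcal{G}_{\mathrm{lam}}$ with $\mathcal{G}$ and of $|F_\pl|_{\mathrm{lam}}$ with $|F_\pl|$ for fixed $b_{1j}, b_{2j}$ (there credited to \cite{ang2}, Prop.~2.1). Your inline verification of these equivalences --- the bounded linear coordinate map $s_j \mapsto (c_{1j},c_{2j})$ transferring weak $L^p$-convergence, the triangle inequality bounding $|\nabla_{m_j^\perp}s_j|$ by $\sum_i |\nabla_{m_j^\perp}c_{ij}|$, and the standard w.l.s.c.\ of the convex hardening and total-variation terms --- is exactly the content of that cited equivalence, so nothing is missing.
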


\begin{remark}
Proving Theorem \ref{thm:main_alt} is equivalent to proving Theorem 1, since $\mathcal{G}_{\mathrm{lam}}(\{s_j\})$ is semi-norm equivalent to $\mathcal{G}(\{s_j\})$, and $ |F_{\pl}|_{\textrm{lam}}$ is norm-equivalent to $|F_{\pl}|$, for fixed $b_{1j}, b_{2j}$, as we showed in (\cite{ang2}, Prop. 2.1).
\end{remark}

\begin{remark}
Geometrically linear elasticity also works here -- simply replace $W^{1,r}$ with $W^{1,p}$ in the statement of the result. 
\end{remark}

\subsection{$L^p$-hardening, $p>1$}

We now derive upper and lower bounds for the l.s.c. envelope of the single-slip energy, $\mathcal{E}^p$, such that the upper bound is obtained by approximating a given single-plane test funtion with single-slip laminates. For the lamination procedure we needn't pay attention to the elastic energy, since it is appropriately continuous with respect to the single-slip approximation which we use -- see the proof of Theorem 5.1 in \cite{ang}.
\subsubsection{Lower bound}
First of all, the functional 
\be
E_{\mathrm{lb}}(y,\{s_j\}) :=  \left\{
\begin{array}{ccl}
	\int_{\Omega} W_{\textrm{el}}(F^{\mathrm{el}})\dx + \sigma\int_{\Omega}\mathcal{G}_{\mathrm{lam}}(\{s_j\}) + \tau\sum_{j=1}^N\int_{\Omega} (|c_{1j}| + |c_{2j}|)^p\dx & : & \textbf{(RSC)}~\textrm{holds},\\
	& & \\
	+\infty & : & \textrm{otherwise}
	\label{rel_lb}
\end{array}
\right.
\ee
is a good lower bound on the l.s.c. envelope of $E$, in the sense that 
\begin{itemize}
\item It agrees with $\mathcal{E}^p$ on the set of smooth single-slip test functions.
\item It is $W^{1,r}\times L^p$-weakly l.s.c on the set of single-plane test functions, due to the convexity of the hardening term and the definition of the laminated curl as a total variation, and also the fact that Lemma \ref{wk_minors} (below) takes care of the weak continuity of the minors of $F_{\mathrm{el}}$.
\item \textbf{(RSC)} is preserved along weakly converging bounded-energy sequences -- see Lemma \ref{lem:main}.
\end{itemize}   

\begin{remark}
Note that the two plastic contributions to this lower bound can be arrived at by separately optimising the hardening energy (pointwise), and then the dislocation energy via approximation of a single-plane test function with single-slip laminates in two different ways: in general, one cannot reach this lower bound by optimising the total plastic energy with such laminates in one fell swoop, in contrast to the $L^1$-case -- see below.
\end{remark}

\subsubsection{Upper bounds}
Our first (and coarsest) upper bound can be obtained by weakly approximating a given single-plane slip, $\beta=\sum_{i,j}c_{ij}b_{ij}\otimes m_j$, with alternating flat slices of single-slip in the $b_{1j}$ and $b_{2j}$-directions, as we did in \cite{ang} for the $p=1$ case. Here, however, the relative thickness (weight) of the alternate slices in a bi-layer is allowed to vary from one bi-layer to the next -- in the $L^1$-case the laminated plastic energy is indifferent to the weighting, due to the 1-homogeneity of the hardening and the curl, but for $p>1$ there is, for each bi-layer, a unique optimal choice of the weighting, as we now show.

Thus, using notation from Theorem~\ref{thm:nonlinear}, we laminate a smooth, single-plane $\beta$ by filling each $\Omega_j$ with a stack of bi-layers, each parallel to $m_j^{\perp}$, and having thickness $\frac{1}{2^n}$, $n\in\mathbb{N}$, and then defining on each successive bi-layer an alternating (as we move in the $m_j$-direction), single-slip $\beta_n^j$, by
\be
\beta_n^j =
 \left\{
\begin{array}{rcl}
\lambda_j^{-1} c_{1j} b_{1j}\otimes m_j & : & \textrm{top slice} \left(\textrm{of thickness}~\frac{\lambda_j}{2^n}\right)\\
 & & \\
(1-\lambda_j)^{-1} c_{2j} b_{2j}\otimes m_j & : & \textrm{bottom slice} \left(\textrm{of thickness}~\frac{(1-\lambda_j)}{2^n}\right)\\
\end{array}
\right.,\label{slice}
\ee  
where the $c_{ij}$ are evaluated on the dividing-plane of the bi-layer in (\ref{slice}), and, for definiteness, the $(n+1)$-th laminate is obtained from the $n$-th by bisecting each of the bi-layers along a slip plane. The scaling with $\lambda_j(t_j)\in(0,1)$ (resp. ($1-\lambda_j(t_j)$)) guarantess that $\beta_n^j\rightharpoonup\beta^j$ in $L^p(\Omega_j)$ as $n\ra\infty$. Here, $t_j$ represents a coordinate running in the $m_j$-direction, and the weighting function $\lambda_j(\cdot)$ is assumed continuous. 

With this definition, we get, with abuse of notation
\be\label{E_lam}
\mathcal{E}^p(y_n,\beta_n)\ra \int_{\Omega} W_{\textrm{el}}(F^{\mathrm{el}}(y,\beta))\dx + \tau\sum_{j=1}^N\left(\int_{\Omega_j}\frac{|c_{1j}|^p}{\lambda_j^{p-1}} + \frac{|c_{2j}|^p}{(1-\lambda_j)^{p-1}}\right)\dx + \sigma\int_{\Omega}\mathcal{G}_{\mathrm{lam}}(F_\pl(\beta)),
\ee
as  $n\ra\infty$, for an appropriate zig-zag perturbation, $y_n$, of $y$ which accommodates the lamination -- see Theorem 5.1 of \cite{ang} for details, and for the convergence of the elastic energy under such perturbations. Note also that the limiting curl of $\beta_n$ is independent of $\lambda(\cdot)$ (where $\lambda|_{\Omega_j} = \lambda_j$), by homogeneity, and that this limit is just the laminated curl that appears in the $p=1$ problem.

Next, elementary calculus allows us to optimise each $\lambda_j(t_j)$ in (\ref{E_lam}), which results in 
\[
\lambda_j^{\textrm{opt}}(t_j) = \frac{\|c_{1j}\|_{L^p\left(\Omega_j^{t_j}\right)}}{\|c_{1j}\|_{L^p\left(\Omega_j^{t_j}\right)}  + \|c_{2j}\|_{L^p\left(\Omega_j^{t_j}\right)}},
\]
where $\Omega_j^{t_j}$ is a 2-d slice through $\Omega_j$ at the level $t_j$.

Thus, inserting these optimal $\lambda_j$ into the right-hand side of (\ref{E_lam}) shows that 
\be
E_{\textrm{ub}}^{(1)}(y,\{s_j\}):= 
\left\{
\begin{array}{l}
\int_{\Omega} W_{\textrm{el}}(F^{\mathrm{el}}(y,\{s_j\})\dx + \tau\sum_{j=1}^N\int\left(\|c_{1j}\|_{L^p\left(\Omega_j^{t_j}\right)}  + \|c_{2j}\|_{L^p\left(\Omega_j^{t_j}\right)}\right)^p~dt_j\\
 \\
+ \sigma\int_{\Omega}\mathcal{G}_{\mathrm{lam}}(\{s_j\})	~~~\qquad: \qquad\textbf{(RSC)}~\textrm{holds},\\
\\
+\infty\qquad\qquad\qquad\qquad: \qquad\textrm{otherwise}
\label{ub1}
\end{array}
\right.
\ee
is an upper bound for the sought-after relaxation.

\begin{remark}
This upper bound also holds for test functions which satisfy just the mild slip-patch-regularity condition of Theorem 5.1 in \cite{ang}, since we showed there that one can mollify a single-plane $\beta$ strongly continuously in $L^p$, such that the laminated curl essentially does not increase. 
\end{remark}

\begin{remark}
If $c_{1j}\propto c_{2j}$ globally on $\Omega_j$, for each $j=1,\ldots N$, then $E_{\textrm{lb}}(y,\{s_j\}) = E_{\textrm{ub}}^{(1)}(y,\{s_j\})$, by Cauchy-Schwarz, so we know precisely what the relaxation looks like for such nice test functions (provided they are reasonably smooth).
\end{remark}

\begin{remark}
In uniform-shear experiments of the type described in \cite{ang}, one expects minimising slips to point in the direction of the overall shear everywhere (and uniqueness of minimisers for a strictly convex hardening penalty, along with appropriate reflection symmetry of the energy, proves this, at least for geometrically linear elasticity). Thus, in such cases, it looks as though the proportionality condition just mentioned can be taken to hold w.l.o.g. when looking for minimisers, and hence one might conjecture that $E_{\mathrm{lb}}$ is the correct relaxed energy.
\end{remark}

We can obtain a better upper bound by allowing the bi-layer weighting $\lambda$, now assumed to be a Lipschitz function of $x\in\Omega$, to be non-constant in activated slip-planes. Using the co-area formula to take care of the curl generated across the (in general) undulating dividing-surfaces of the bi-layers, we obtain the following laminated energy in the limit $n\ra\infty$:
\begin{eqnarray}\label{wobbly_lam}
I(y,\{s_j\},\lambda) & := & \int_{\Omega} W_{\textrm{el}}(F^{\mathrm{el}})\dx + \sum_{j=1}^N\left\{\tau\int_{\Omega_j}\frac{|c_{1j}|^p}{\lambda_j^{p-1}} + \frac{|c_{2j}|^p}{(1-\lambda_j)^{p-1}}\dx\right.\nonumber\\
 & & + \sigma\int_{\Omega_j}\left|\nabla_{m_j^{\perp}} c_{1j} - c_{1j}\nabla_{m_j^{\perp}}\ln\lambda_j\right| +  \left|c_{1j}\nabla_{m_j^{\perp}}\ln\lambda_j\right|\dx\nonumber\\
 & & + \left.\sigma\int_{\Omega_j}\left|\nabla_{m_j^{\perp}} c_{2j} - c_{2j}\nabla_{m_j^{\perp}}\ln(1-\lambda_j)\right| +  \left|c_{2j}\nabla_{m_j^{\perp}}\ln(1-\lambda_j)\right|\dx\right\}.
\end{eqnarray}

Here, assuming $\lambda(x)$ to be Lipschitz continuous and bounded away from $0$ and $1$, along with $\beta\in C^1$, allows one to handle the displacement-perturbation as in \cite{ang}: that is to say, the elastic energy is still continuous with respect to lamination.

Clearly, by the triangle inequality, the curl-type contribution to (\ref{wobbly_lam})
is no smaller than the laminated curl, $\int_{\Omega}\mathcal{G}_{\mathrm{lam}}(\{s_j\})$, and an easy calculation shows that the laminated curl is reached iff at each point of $\Omega_j$ either $\nabla_{m_j^{\perp}}\lambda_j$ vanishes or $\nabla_{m_j^{\perp}} c_{1j}$ and $\nabla_{m_j^{\perp}} c_{2j}$ point in opposite directions, the latter of which will not hold for general test functions, of course. Choosing $\lambda_j$ constant on slip planes minimises the curl contribution, but the corresponding hardening contribution will then, in general, be some way from the pointwise-optimal hardening energy which appears in the lower bound (\ref{rel_lb}). Thus, for $p>1$, there is a trade-off between the hardening and dislocation energy of an undulating laminate, and one would have to be supremely optimistic to expect an explicit formula for the optimal energy.

One might conjecture that the upper-bound
\be
E_{\textrm{ub}}^{(2)}(y,\{s_j\}) := \inf\left\{I(y,\{s_j\},\lambda):\lambda\in\mathrm{Lip}(\Omega),~0<\lambda(x)<1\right\}
\ee
is in fact the relaxation we're looking for, at least for sufficiently smooth $s_j$. We have not, however, been able to prove the convexity of this uncountable {\em infimum}.

By construction, if $\mathcal{E}^p_{\mathrm{rel}}(y,F_\pl(\beta))$ denotes the single-slip-to-single-plane relaxation of $E$ (i.e., $\mathcal{E}^p_{\mathrm{rel}}(x) = \inf\{\liminf_{x_n\rightharpoonup x}E(x_n)\}$, with the {\em infimum} taken over all single-plane sequences weakly converging to a given single-plane $x$), then we have 
\be\label{energy_chain}
E_{\mathrm{lb}}(y,\{s_j\})\leq \mathcal{E}^p_{\mathrm{rel}}(y,\{s_j\})\leq E_{\textrm{ub}}^{(2)}(y,\{s_j\})\leq E_{\textrm{ub}}^{(1)}(y,\{s_j\}),
\ee
on single-plane test functions, so that, in particular, $\mathcal{E}^p_{\mathrm{rel}}$, extended to $+\infty$ when $\textbf{(RSC)}$ is violated, satisfies the conditions of Theorem \ref{thm:main}. Moreover, the  last inequality is in general strict, by the remarks above and the following example.

\begin{example}\label{ex:1}
Here is an essentially 1-d example of a single-plane $\beta$ for which the optimal flat lamination can be bettered by a sigmoidal one for $p=2$. 

Let $\Omega = (0,1)\times(-X,X)\times(0,1)\in\mathbb{R}^3$, with active slip-normal pointing in the $x_3$-direction everywhere, and for the material parameters let $\sigma=\tau=1$. Denote the single-slip strain by $\beta = c_1b_1 + c_2b_2$, such that
\[
c_1(x) =
 \left\{
\begin{array}{lcl}
 1 & : & x_2<0  \\
  & &    \\
 \eps & : & x_2\geq0 \\
\end{array}
\right.,\qquad
c_2(x) =
 \left\{
\begin{array}{lcl}
 \eps & : & x_2<0  \\
  & &    \\
 1 & : & x_2\geq0 \\
\end{array}
\right.,
\]
for positive constants $\eps\ll 1$ and $X\gg 1$.

Now, by symmetry, the optimal flat lamination is obtained by taking $\lambda=\frac{1}{2}$
 everywhere, and the plastic part of $E_{\textrm{ub}}^{(1)}(u,F_\pl(\beta))$ is readily calculated to be $2(2X(1+\eps)^2 + (1-\eps))$.
 
By choosing, instead, an appropriate non-constant $\lambda$, and $X$ large enough, we can almost halve the plastic energy, coming close to the plastic part of $E_{\textrm{lb}}(u,\{s_j\})$ (note that this is a case where $\nabla_{12}c_1$ and $\nabla_{12}c_2$ point in opposite directions everywhere). 

Specifically, we choose our Lipschitz $\lambda$ to be
\[
\lambda(x) =
 \left\{
\begin{array}{lcl}
 \frac{1}{1+\eps} & : & x_2\leq -1\\
   & & \\
 \textrm{affine} & : & -1< x_2 < 1\\
   & & \\ 
 \frac{\eps}{1+\eps} & : & x_2\geq 1 \\
\end{array}
\right..
\]

Now, the $L^2$-part of the plastic energy in $I(u,\{s_j\},\lambda)$ is calculated to be
$2\{(X-1)(1+\eps)^2 + H_1(\eps)\}$, for some $H_1(\eps)\in[1+\eps+2\eps^2, 2+\eps(1+\eps)]$, while the curl part is of the form $H_2(\epsilon)$ (i.e. independent of $X$). Thus, the total plastic energy of the jagged laminate may be written as $2(X-1)(1+\eps)^2 + 2H_1(\eps) + H_2(\eps)$. For $X$ large and $\epsilon$ small, this is roughly one-half of the plastic energy of the optimal flat laminate, as claimed.
\end{example}

\section{Existence of minimisers} \label{sec:existence}
We will use the direct method of the calculus of variations to prove the existence of minimisers for any single-plane energy which satisfies the requirements stated in Theorem \ref{thm:main}.

\subsection{Minimising sequences}
\begin{proposition}\label{hoelder}
Assume that the conditions of Theorem \ref{thm:main} are satisfied, and consider a finite-energy minimising sequence $(y_k, \{s_j\}_k)$. We then have, up to taking a subsequence, $\{s_j\}_k \weak s_j$ in $L^p$ and $y_k \weak y$ in $W^{1,r}$, for some $s_j$ and $y$, where $\frac{1}{r} =  \frac{1}{p} + \frac{1}{q} >0$.
\end{proposition}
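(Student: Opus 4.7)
The plan is to read off boundedness of the slips from the stated lower bound on $E_\pl$, leverage the single-plane constraint (which must hold pointwise along any finite-energy sequence) to express $F_\pl^{-1}$ explicitly and control $F_\pl$ in $L^p$, use the $q$-growth of $W_\el$ to control $\nabla y_k F_\pl^{-1}$ in $L^q$, and finally combine these via Hölder to get $\nabla y_k$ bounded in $L^r$ with $1/r=1/p+1/q$.

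More precisely, since $E(y_k,\{(s_j)_k\}) \le C$, the coercivity assumption on $E_\pl$ first delivers $\sum_j \|(s_j)_k\|_{L^p(\Omega)}^p \le C$. By reflexivity of $L^p(\Omega)$ (with $p>2$) I can extract a subsequence with $(s_j)_k \weak s_j$ in $L^p$ for each $j=1,\dots,N$. Because finiteness of $E$ along the sequence forces the single-plane condition $|(s_i)_k||(s_j)_k|=0$ a.e. for $i\neq j$, at a.e.\ $x\in\Omega$ at most one index $j(x)$ is active, so
\[
F_\pl = \Id + s_{j(x)}\otimes m_{j(x)}, \qquad F_\pl^{-1} = \Id - s_{j(x)}\otimes m_{j(x)},
\]
yielding the pointwise bound $|F_\pl|+|F_\pl^{-1}| \le C\bigl(1+\sum_j |(s_j)_k|\bigr)$. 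Hence $F_\pl$ and $F_\pl^{-1}$ associated to the minimising sequence are bounded in $L^p(\Omega;\R^{3\times 3})$.

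Next, the growth condition $W_\el(A)\ge -c_1 + c_2 |A|^q$ combined with finiteness of $E_\el$ gives $\|\nabla y_k\, F_\pl^{-1}\|_{L^q(\Omega)} \le C$. Writing $\nabla y_k = (\nabla y_k\, F_\pl^{-1})\cdot F_\pl$ and applying Hölder's inequality with the conjugate triple $(q,p,r)$ (which is legitimate since $q>p/(p-2)$ forces $1/q < (p-2)/p$, so $1/r = 1/p + 1/q < 1$ and in particular $r>1$), I obtain $\|\nabla y_k\|_{L^r(\Omega)} \le C$. Modulo the standard additive normalisation (or the implicit Dirichlet data), this produces a uniform $W^{1,r}(\Omega;\R^3)$ bound, whereupon reflexivity supplies a further subsequence with $y_k \weak y$ in $W^{1,r}$.

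The only conceptually delicate point — whether the weak limit $\{s_j\}$ itself still satisfies the single-plane condition — is not needed here: the proposition only asserts the existence of weak limits, and the preservation of the constraint in the limit is postponed to the subsequent (harder) lemmas. Thus the main obstacle is purely verificational, namely keeping the exponents $(p,q,r)$ straight so that the Hölder step produces an admissible $r\ge 1$; the hypothesis $q>p/(p-2)$ is exactly what makes this work.
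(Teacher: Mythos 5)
Your proposal is correct and takes essentially the same route as the paper: boundedness of the slips in $L^p$ from the coercivity of $E_\pl$, the explicit formula $F_\pl^{-1} = \Id - s_{j(x)}\otimes m_{j(x)}$ a.e.\ via the single-plane condition, the $L^q$ bound on $\nabla y_k F_\pl^{-1}$ from the growth of $W_{\mathrm{el}}$, and H\"older's inequality applied to $\nabla y_k = (\nabla y_k F_\pl^{-1})F_\pl$ to obtain the $L^r$ bound with $\frac{1}{r}=\frac{1}{p}+\frac{1}{q}$. Your explicit check that $q>p/(p-2)$ yields $r>1$, and your remark on the normalisation needed to upgrade the gradient bound to a $W^{1,r}$ bound, merely spell out details the paper leaves implicit.
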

\begin{proof}
The first part of the claim is immediate by inspection, while the second follows from 
\[
\norm{\nabla y F_\pl^{-1}}_{L^q} \ge \frac{\norm{\nabla y}_{L^r}}{\norm{F_\pl}_{L^p}},
\]
which in turn follows from H\"older's inequality, since $F_\pl^{-1} = \Id - \sum_{j=1}^N s_j\otimes m_j$ almost everywhere along the minimising sequence, by the single-plane condition -- see~\cite{MM}.
\end{proof}

\subsection{Weak convergence of minors}

\begin{lemma}\label{wk_minors}
If $p>2$ and $q>\frac{p}{p-2}$, then for a minimising sequence as in Proposition \ref{hoelder}, we have, up to a subsequence,
$$
\Mino_{1,2,3}((F_{\mathrm{el}})_k) := \Mino_{1,2,3}(\nabla y_k (F_\pl^{-1})_k) \weak \Mino_{1,2,3}(\nabla y F_\pl^{-1})
$$
in $L^1(\Omega)$, i.e., each sequence of minors converges weakly to the minor of the limiting elastic deformation along the minimising sequence.
\end{lemma}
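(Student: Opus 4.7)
The strategy follows~\cite{MM}. Under the single-plane condition, $F_\pl^{-1}(x)=\Id-\sum_{j}s_j(x)\otimes m_j$ holds pointwise with $s_j\cdot m_j=0$, giving $\det F_\pl=1$ a.e. and, by the multiplicative formulas for minors,
\begin{align*}
F_{\mathrm{el}} &= \nabla y - \sum_{j=1}^N \bigl((\nabla y)\,s_j\bigr)\otimes m_j,\\
\mathrm{Cof}\,F_{\mathrm{el}} &= \mathrm{Cof}(\nabla y) + \sum_{j=1}^N \bigl(\mathrm{Cof}(\nabla y)\,m_j\bigr)\otimes s_j,\\
\det F_{\mathrm{el}} &= \det\nabla y.
\end{align*}
The task therefore reduces to passing to the limit in bilinear products of the form $(\nabla y_k)\,s_j^k$ and $(\mathrm{Cof}(\nabla y_k)\,m_j)\otimes s_j^k$, together with $\det\nabla y_k$. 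The assumption $q>p/(p-2)$ with $p>2$ gives $\tfrac{1}{p}+\tfrac{1}{r}<1$, so each of these products is uniformly bounded in a fixed $L^{1+\delta}(\Omega)$ along the sequence; in addition, $\Curl\nabla y_k\equiv 0$, so the curl-side hypothesis in any div-curl argument is free.

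For the $1\times 1$ minor, the compensation comes from the $s_j^k$: since $s_j^k\in m_j^{\perp}$ pointwise and $\|\nabla_{m_j^{\perp}}s_j^k\|_{\mathcal{M}(\Omega)}\le C$ by the lower bound on $E_\pl$, the divergence $\dv s_j^k$ (which coincides with the in-plane divergence of $s_j^k$) is uniformly bounded in Radon measures, hence precompact in $W^{-1,\alpha}_{\mathrm{loc}}$ for any $\alpha\in(1,3/2)$. The Murat--Tartar div-curl lemma, in the form appropriate to the exponents $L^p\times L^r$ with $\tfrac{1}{p}+\tfrac{1}{r}<1$, then yields $(\nabla y_k)\,s_j^k\weak (\nabla y)\,s_j$ in the sense of distributions, and hence weak convergence of $F_{\mathrm{el}}^k$. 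For the $2\times 2$ minor, we use the identity $\mathrm{Cof}(\nabla y)\,m_j=\partial_a y\times \partial_b y$, where $\{e_a,e_b\}$ is an orthonormal basis of $m_j^{\perp}$, together with the null-Lagrangian representation
\[
\partial_a y\times\partial_b y=\tfrac{1}{2}\bigl(\partial_a(y\times \partial_b y)-\partial_b(y\times \partial_a y)\bigr).
\]
This exhibits $\mathrm{Cof}(\nabla y_k)\,m_j$ as a distributional divergence of expressions that are strongly convergent in $L^{s}_{\mathrm{loc}}$ (by Rellich compactness of $y_k$), providing the $W^{-1}$-compactness needed for a second div-curl pairing against the $L^p$-weakly convergent $s_j^k$. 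The $3\times 3$ minor reduces via $\det F_{\mathrm{el}}=\det\nabla y$ and the null-Lagrangian identity $\det\nabla y=\tfrac{1}{3}\dv(\mathrm{Cof}(\nabla y)^T y)$ to the cofactor convergence just obtained, plus one more div-curl pairing.

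The principal obstacle is the $2\times 2$ step. The exponent $r$ in Proposition~\ref{hoelder} may well lie strictly below $2$, so the $2\times 2$ minors of $\nabla y_k$ are not, on their own, weakly continuous in $L^1$; and the tangential-only BV regularity of $s_j^k$ is not enough for strong $L^p$-precompactness. The resolution is to combine both weak pieces of information through div-curl, leveraging the null-Lagrangian divergence structure of the minors of $\nabla y$ on one side and the planar-divergence compactness of $s_j^k$ on the other. The single-plane condition is indispensable throughout, since it both forces $\det F_\pl=1$ (removing higher-order nonlinearities in $s_j$ that would obstruct the algebraic identities displayed above) and makes the distributional divergence of $s_j$ entirely tangential, so that the measure bound on $\nabla_{m_j^{\perp}} s_j^k$ can be converted into the $W^{-1}$-compactness that div-curl demands.
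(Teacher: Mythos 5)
Your overall skeleton is the paper's: the same multiplicative identities (valid because the single-plane condition forces $\det F_\pl=1$ a.e.\ and $F_\pl^{-1}=\Id-\sum_j s_j\otimes m_j$), div--curl pairings in the spirit of \cite{cdm}, and the conversion of the measure bound on $\nabla_{m_j^\perp}s_j^k$ into compactness of $\dv s_j^k$; your treatment of the $1\times1$ minors coincides with the paper's essentially verbatim. The $2\times2$ step, however, contains a genuine error. You claim that $\operatorname{cof}(\nabla y_k)\,m_j$ is $W^{-1}$-compact because, in the representation $\partial_a y\times\partial_b y=\tfrac{1}{2}\left(\partial_a(y\times\partial_b y)-\partial_b(y\times\partial_a y)\right)$, the bracketed terms converge strongly by Rellich. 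They do not: $y_k\times\partial_b y_k$ is the product of a strongly convergent factor and a merely \emph{weakly} convergent one ($\partial_b y_k$ in $L^r$), hence converges only weakly, so this identity yields distributional convergence of $\operatorname{cof}(\nabla y_k)\,m_j$ to the correct limit but no $W^{-1}$-compactness. Pairing a weakly convergent distribution against the weakly-$L^p$ convergent $s_j^k$ gives nothing; moreover, each entry of $(\operatorname{cof}(\nabla y_k)\,m_j)\otimes s_j^k$ is a product of two scalar sequences, and your write-up never identifies where the div--curl cancellation in that product lives. The paper's mechanism is different: each $2\times2$ minor of $F_{\mathrm{el}}$ is written as $\det H=\eta\cdot\xi$, where $\eta$ is a vector of $2\times2$ minors built from two rows of $\nabla y$ -- \emph{exactly} divergence-free by the Piola identity, so no compactness argument is needed on that side -- while $\xi$ is a row of $F_\pl$, whose curl equals $\nabla s_3^j\times m^j$, involves only in-plane derivatives, and is therefore measure-bounded by the dislocation energy; the theorem of \cite{cdm} then applies directly. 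In your notation, the repair is to pair the divergence-free minor vectors of $\nabla y_k$ against the fields $(s_j^k)_l\, m_j$, whose curl is controlled, rather than to put the compensation on the cofactor side.

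There is a second, related gap in the integrability bookkeeping. You assert that all the bilinear products are bounded in a fixed $L^{1+\delta}$ because $\tfrac1p+\tfrac1r<1$; this is correct for $(\nabla y_k)\,s_j^k$, but for the cofactor products you need $\operatorname{cof}\nabla y_k$ bounded in a space conjugate to $L^p$, and from $\nabla y_k\in L^r$ alone one only gets $\operatorname{cof}\nabla y_k\in L^{r/2}$, which may fail even to be $L^1$ since the hypotheses allow $r<2$. The paper obtains the needed bound by transferring the elastic-energy control of $\operatorname{cof}F_{\mathrm{el}}$ through $\operatorname{cof}\nabla y=\operatorname{cof}(F_{\mathrm{el}})\operatorname{cof}(F_\pl)$ and H\"older with $F_\pl\in L^p$, following \cite{MM}; the condition $q>p/(p-2)$ enters precisely here, guaranteeing $r>p/(p-1)$ so that the two factors in the div--curl pairing converge in conjugate Lebesgue spaces. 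This step is entirely absent from your proposal, and without it neither the equi-integrability nor the div--curl hypotheses for the $2\times2$ minors -- and consequently for the $3\times3$ minor, whose treatment rests on the cofactor step -- can be verified.
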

\begin{proof} In what follows, we will often drop sequence subscripts for notational convenience, and $r$ will be determined by the model parameters $p$ and $q$ as in Proposition \ref{hoelder}.

Along our energy-bounded minimising sequence, the plastic-strain tensor has the form
\begin{align*}
F_\pl &= \Id + \sum_{j=1}^N s^j(x) \otimes m^j \\
&= \begin{pmatrix}
1 +  \sum_{j=1}^N s^j_1(x)\, m^j_1   & \sum_{j=1}^N s^j_1(x)\, m^j_2  & \sum_{j=1}^N s^j_1(x)\, m^j_3 \\
  \sum_{j=1}^N s^j_2(x)\, m^j_1   & 1 + \sum_{j=1}^N s^j_2(x)\, m^j_2  & \sum_{j=1}^N s^j_2(x)\, m^j_3 \\
  \sum_{j=1}^N s^j_3(x)\, m^j_1   & \sum_{j=1}^N s^j_3(x)\, m^j_2  & 1 +\sum_{j=1}^N s^j_3(x)\, m^j_3
\end{pmatrix},
\end{align*}
with $s^j \perp m^j$ and $\abs{s^i}\abs{s^j} = 0$ for $i\neq j$, a.e. in $\Omega$. 

Moreover, due to the side condition and  $s^j \perp m^j$, we have 
\[
F_\pl^{-1} = \Id - \sum_{j=1}^N s^j(x) \otimes m^j,\quad\textrm{a.e. in}~\Omega.
\]
In order to prove convergence of minors, we need to control the integrability of the entries in the elastic-strain tensor, which reads
\begin{align*}
F_{\textrm{el}} &= \nabla yF_\pl^{-1} \\
&= \begin{pmatrix}
y_{1,1} - \sum_{j=1}^Nm_1^j\sum_{k=1}^3 s_k^j y_{1,k} & y_{1,2} -\sum_{j=1}^Nm_2^j\sum_{k=1}^3 s_k^j y_{1,k} & y_{1,3} -\sum_{j=1}^{N}m_3^j\sum_{k=1}^3 s_k^j y_{1,k} \\
y_{2,1} - \sum_{j=1}^{N}m_1^j\sum_{k=1}^3 s_k^j y_{2,k} & y_{2,2} -\sum_{j=1}^{N}m_2^j\sum_{k=1}^3 s_k^j y_{2,k} & y_{2,3} -\sum_{j=1}^{N}m_3^j\sum_{k=1}^3 s_k^j y_{2,k} \\
y_{3,1} - \sum_{j=1}^{N} m_1^j\sum_{k=1}^3 s_k^j y_{3,k} & y_{3,2} -\sum_{j=1}^{N}m_2^j\sum_{k=1}^3 s_k^j y_{3,k} & y_{3,3} -\sum_{j=1}^{N}m_3^j\sum_{k=1}^3 s_k^j y_{3,k}
\end{pmatrix},
\end{align*}
such that all of the matrix entries are in $L^q$, due to the growth condition on the elastic energy density, $W_{\textrm{el}}$. 
\\~\\
{\em The $1\times 1$-minors.} The `bad' terms appearing in the $1\times 1$-minors are objects of the form $M_{1}:=\nabla y_i\cdot s^j$, such that, by the argument of Proposition \ref{hoelder}, and another application of H\"{o}lder's inequality, $M_1\in L^z$, with $z$ given by $\frac{1}{z}=\frac{1}{r}+\frac{1}{p}=\frac{2}{p}+\frac{1}{q}<1$. $M_1$ is thus equi-integrable along the minimising sequence.

We now apply a div-curl argument to $M_1$. Clearly, $\nabla y$ is curl-free, and with the aid of an orthonormal basis $\mathcal{O}^j=(m^j,e_1^j, e_2^j)$ adapted to the $j$-th slip normal, we have
\bq
|\textrm{div} s^j|  & = & |\p_{m^j}\tilde{s}_m^j + 	
\p_{e_1^j}\tilde{s}_1^j + \p_{e_2^j}\tilde{s}_2^j|\nonumber\\
    & = & |\p_{e_1^j}\tilde{s}_1^j + \p_{e_2^j}\tilde{s}_2^j|\nonumber\\
    & \leq & 2|\nabla_{m_{\perp}^j}s^j|\\
    & \leq & C,
\eq
where $\tilde{s}_i^j$ are the components of $s^j$ w.r.t. $\mathcal{O}^j$, and $C$ is a constant.

Thus, passing to a subsequence, we get $\div s^j_k\ra \div s^j\in W^{-1,1}$ for some $s^j$ as $k\rightarrow\infty$, since weak-$*$ convergence of Radon measures implies strong convergence in $W^{-1,1}(\Omega)=(W_0^{1,\infty})^{\ast}(\Omega)$, and we may thus appeal to the div-curl Theorem of \cite{cdm} to get the required convergence of the $1\times 1$-minors.
\\~\\
{\em The $2\times 2$-minors.} For the $2\times 2$-minors, first notice that
\bq
\textrm{cof}~(\nabla y\cdot F^{-1}_\pl) & = &\textrm{cof}~\nabla y\cdot\textrm{cof} ~F^{-1}_\pl\\
 & = & (\textrm{cof}~\nabla y)\cdot F^T_\pl,
\eq
a.e., since $\det F_\pl^{-1}=1$, a.e..

Thus, since $\textrm{cof}~(\nabla y\cdot F^{-1}_\pl)\in L^q$, by our assumptions on $W_{\mathrm{pl}}$, we have, again by \cite{MM}, $\textrm{cof}~\nabla y\in L^r$. In other words, the $2\times 2$-minors of $\nabla y$ are also in $L^r$.

Now, according to Lemma 2.4 of \cite{MM}, the $2\times 2$-minors of $\nabla y\cdot F^{-1}_\pl$ can be written as $\frac{\det H}{\det F_\pl} ~~(=\det H)$, for a $3\times 3$ matrix $H$ which consists of two rows of $\nabla y$, and one of $F_\pl$: for example, something of the form
\[
H=
\begin{pmatrix}
y_{1,1} & y_{1,2} & y_{1,3}\\
y_{2,1} & y_{2,2} & y_{2,3}\\
\sum_{j=1}^N s^j_3(x)\, m^j_1   & \sum_{j=1}^N s^j_3(x)\, m^j_2  & 1 +\sum_{j=1}^N s^j_3(x)\, m^j_3
\end{pmatrix}.
\]
Expanding the determinant of this example about the 3rd row, we see by the standard div-curl lemma that the $2\times2$-minors in the expansion can be taken to converge weakly in $L^1$ to the correct limit. Since they are bounded in $L^r$, by the above, we also have these minors converging weakly in $L^r$ to the correct limit, along a subsequence.

Next note that $\det H = \eta\cdot\xi$, where $\eta$ is a vector of $2\times2$-minors of $\nabla y$, and 
\[\xi = \left(\sum_{j=1}^N s^j_3(x)\, m^j_1,~\sum_{j=1}^N s^j_3(x)\, m^j_2,~ 1 +\sum_{j=1}^N s^j_3(x)\, m^j_3\right)^T. 
\]
Thus, $\div\eta = 0$, while $\curl(s_3^j(m_1^j,m_2^j,m_3^j)^T)$ takes the form $(0,\p_{e_2^j}s_3^j ,- \p_{e_1^j}s_3^j)$ in a frame $\mathcal{O}^j$ as above, which is once again a measure dominated by the dislocation energy. Taking $q>\frac{p}{p-2}$ ensures that $\xi$ and $\eta$ converge in conjugate Lebesgue spaces, and hence all the conditions of the div-curl Theorem in \cite{cdm} are satisfied, giving the required convergence along a subsequence.
\\~\\
{\em The $3\times 3$-minor.} This is just $\det\nabla y$. We can expand this $3\times3$-determinant as the dot product of a divergence-free vector of $2\times2$-minors and a curl-free vector, both of which are bounded in $L^r$, as above. By our stated assumption on $p$ and $q$, this ensures that the two vectors converge weakly in conjugate Lebesque spaces, and so once more we may apply \cite{cdm} to get the required convergence along a suitable subsequence.
\end{proof}

\subsection{Preservation of the single-plane condition under weak-$L^p$ convergence}

Consider now a sequence $\{s_j^k\}$, $j=1,\ldots,N$, $k\in\mathbb{N}$, of relaxed slips with the following properties: 
\begin{enumerate}[(P1)]
\item \label{enum:bjsecond} For each $k$, $\{s_j^k\}_{j=1}^N$ satisfies our \emph{single-plane condition}, i.e.,
\[s_j^k(x)\in m_j^{\perp}\quad\textrm{and}\quad |s_i^k(x)||s_j^k(x)|=0\quad\textrm{for a.e. }~x\in\Omega,~i\neq j.
\]	
\item\label{enum:bjfirst} $s_j^k \weak s_j$ in $L^p$, $p\ge 1$, for some $s_j$, as $k\rightarrow\infty$.
\item \label{enum:bjlast} The density of geometrically necessary dislocations is uniformly summable, i.e.,
\begin{align*}
\mathcal{G}(\{s_j^k\}_{j=1}^N)\leq K,
\end{align*}
for all $k\in\mathbb{N}$ and some $K<\infty$, where $\mathcal{G}(\cdot)$ is the modified curl defined in (\ref{mod_curl}). 
\end{enumerate}

Our closedness result is then the following.

\begin{lemma} \label{lem:main}
Assume we have a sequence $\{s_j\}^k, k\in\mathbb{N}$, satisfying properties~(P\ref{enum:bjsecond})-(P\ref{enum:bjlast}) above. Then the limit $\{s_j\}$ satisfies the single-plane condition, (P\ref{enum:bjsecond}). 
\end{lemma}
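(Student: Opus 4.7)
The plan is to upgrade the weak $L^p$-convergence of each slip component to pointwise almost-everywhere convergence along a subsequence, so that the nonconvex constraint $|s_i^k||s_j^k|=0$ passes to the limit trivially. The key compactness input is the dislocation bound (P\ref{enum:bjlast}): for each fixed $j$, expanding $|\nabla_{m_j^\perp} s_j^k|$ in an orthonormal pair $(e_1^j,e_2^j)\subset m_j^{\perp}$ shows that the distributional derivatives $\partial_{e_1^j}s_j^k$ and $\partial_{e_2^j}s_j^k$ are uniformly bounded Radon measures on $\Omega$, even though $\partial_{m_j}s_j^k$ remains uncontrolled.

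By a Fubini-type slicing, this anisotropic BV-control implies that on almost every $2$-dimensional plane $\Pi\perp m_j$ the restrictions $s_j^k\big|_{\Pi}$ form a uniformly bounded sequence in $BV(\Pi)\cap L^p(\Pi)$, and are hence relatively compact in $L^1(\Pi)$. A diagonal extraction through a countable dense set of slicing heights, together with a uniform-integrability argument based on (P\ref{enum:bjlast}), then furnishes a subsequence along which $s_j^k$ converges a.e.\ on $\Omega$; by uniqueness of weak limits, the pointwise limit coincides with $s_j$. Performing this extraction successively for $j=1,\dots,N$ produces a single subsequence with $s_j^k\to s_j$ a.e.\ for every $j$, after which the constraint $|s_i^k||s_j^k|=0$ passes to the limit directly.

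The main obstacle will lie in executing this slicing step cleanly, and in particular in ruling out fine oscillations of the support sets $\Omega_j^k:=\{s_j^k\neq 0\}$ that could mix distinct slip systems in the limit and destroy the pointwise convergence one has extracted. This is where the exclusion lemma of Conti and Ortiz enters: in its original $2$-dimensional form it provides a positive lower bound on the dislocation density on any interface separating two distinct single-slip patches, thereby precluding interleaved mixing of the $\Omega_i^k$ and $\Omega_j^k$. The nontrivial part is adapting this estimate to the present $3$-dimensional setting, since the dislocation measure $\sum_j|\nabla_{m_j^\perp} s_j|$ only registers the projection of each interface onto the corresponding $m_j^{\perp}$-plane. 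One must verify that, after slicing perpendicular to each $m_j$, the $2$-dimensional exclusion lemma can be applied slicewise and integrated to bound the $3$-dimensional measure of any would-be overlap region, which is what ultimately closes the argument against (P\ref{enum:bjsecond}) being violated in the limit.
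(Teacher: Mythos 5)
There is a genuine gap, and it sits at the very first step: properties (P\ref{enum:bjsecond})--(P\ref{enum:bjlast}) do \emph{not} imply a.e.\ convergence of $s_j^k$ along any subsequence, because the dislocation bound controls only the in-plane derivatives, and oscillations in the $m_j$-direction are energetically free. Concretely, take $s_1^k(x)=\sigma(k\,x\cdot m_1)\,v$ with $v\in m_1^{\perp}$, $\sigma$ a $1$-periodic, mean-zero sign function, and all other slips zero. Then (P\ref{enum:bjsecond}) holds trivially (only one system is active), $s_1^k\rightharpoonup 0$ in every $L^p(\Omega)$, and $\mathcal{G}(\{s_j^k\})=0$ since $s_1^k$ depends only on $x\cdot m_1$, so $\nabla_{m_1^{\perp}}s_1^k\equiv 0$; yet $\abs{s_1^k}\equiv\abs{v}>0$, so no subsequence can converge a.e.\ to the weak limit. (The conclusion of the lemma still holds for this sequence -- it is a counterexample to your compactness claim, not to the statement.) Your slicing step is also flawed on its own terms: (P\ref{enum:bjlast}) only gives $\int \norm{s_j^k(\cdot,t)}_{BV(\Pi_t)}\,\dt\le K$ uniformly in $k$; Fatou then makes $\liminf_k$ finite for a.e.\ slice, but the corresponding subsequence depends on $t$, and since $\partial_{m_j}s_j^k$ is completely uncontrolled, BV bounds at countably many heights say nothing about intermediate heights, so the diagonal extraction cannot produce one subsequence converging a.e.\ on $\Omega$. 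Any proof architecture that begins by upgrading (P\ref{enum:bjfirst}) to pointwise convergence is therefore unsalvageable.

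The paper's proof never attempts strong convergence; it argues by contradiction directly at the level of the limit. If two limit slips, say $s_1,s_2$, were simultaneously bounded below by $\delta$ on a set $S'$ of positive measure, one covers $S'$ tightly (in the sense of \eqref{eq:outer_est}) by a union $V$ of parallelepipeds aligned with the $(m_1,m_2,m_1\times m_2)$-frame and subdivides into boxes $C_i$ of width $l$. Lemma~\ref{lemma:curl} converts the dislocation measure into control of $\int_V \abs{\partial_{x_1}\abs{s_2^k}}+\abs{\partial_{x_2}\abs{s_1^k}}\dx$, and Lemma~\ref{lem:int_min} -- the $3$-d adaptation of the Conti--Ortiz exclusion argument, applied on $2$-d slices $Q\times\{\tau\}$ where the single-plane condition forces one of the two slips to vanish a.e.\ slice-wise -- yields $\mathcal{G}(\{s_j^k\})\ge \frac{c}{l}\sum_i\min\bigl\{\int_{C_i\cap S'}\abs{s_1^k}\dx,\ \int_{C_i\cap S'}\abs{s_2^k}\dx\bigr\}$. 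Weak lower semicontinuity of the $L^1$-norms on each $C_i\cap S'$ then gives $\mathcal{G}(\{s_j^k\})\ge \frac{c}{l}\delta\abs{S'}-\hat{\eps}$ for large $k$, which blows up as $l\to 0$ and contradicts (P\ref{enum:bjlast}). So your final paragraph names the right tool -- the exclusion lemma, adapted by slicing perpendicular to the slip normals -- but assigns it the wrong role: it cannot protect a pointwise-convergence extraction that does not exist; it must be used quantitatively, against the limit functions on an ever finer mesh, to show that any mixing set of positive measure would force unbounded dislocation energy.
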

\begin{proof}
First of all, we clearly have $s_j(x)\perp m_j$, for a.e  $x\in\Omega$, by weak convergence. For a contradiction, assume that there exists a measurable set $S \subset \Omega$ with $\abs{S}>0$, such that on $S$ at least two of the $s_j$ are non-vanishing. Without loss of generality, we can take these to be $s_1$ and $s_2$. By approximation of measurable sets by closed sets from the inside, there exists a set $S'\subset S$ and a $\delta>0$, such that on $S'$ we have $\abs{s_1}, \abs{s_2} \ge \delta$ and $\abs{S'}\ge \delta$. 

Now introduce coordinates $(x_1,x_2,x_3)$ which are adapted to the (not necessarily orthonormal) frame $(m_1, m_2, m_1\times m_2)$. By approximation of measurable sets by open sets from the outside, for a given $\eps > 0$, we can find a finite collection of (open) parallelepipeds, aligned with the coordinate mesh, the union of which we denote by $V$, such that 
\begin{equation} \label{eq:outer_est}
\int_{V \setminus S'} \abs{s_1}+\abs{s_2}\dx \le \eps\quad \textrm{and} \quad S' \subset V.
\end{equation}
Note that we can always assume $V \subset\subset \Omega$, by subtracting a thin collar-neighbourhood of the Lipschitz boundary, $\p\Omega$, if necessary,.

We now fill $V$ with finitely many non-overlapping parallelepipeds $\{C_i\}_{i=1}^L$ of edge-length no larger than an arbitrary $l>0$, once more aligned with the $x_i$-coordinate mesh. By the assumed weak convergence of $s_j^k$, we have
\begin{equation} \label{eq:moll}
\liminf_{k\to\infty} \int_{C_i\cap S'}  \abs{s_j^k}\dx \ge \int_{C_i\cap S'} \abs{s_j}\dx, \quad j=1,2.
\end{equation}
 
Moreover, by Lemma~\ref{lemma:curl} (below), there exists a geometric constant $c>0$ such that 
$$
\mathcal{G}(\{s_j^k\})\ge  c\int_V \abs{\frac{\partial \abs{s_2^k}}{\partial x_1}}  +  \abs{\frac{\partial \abs{s_1^k}}{\partial x_2}}\dx, 
$$
and thus, using~Lemma~\ref{lem:int_min} (even further below), 
$$
\mathcal{G}(\{s_j^k\}) \ge \frac{c}{l}\left(\sum_i \min \left\{\int_{C_i\cap S'}\abs{s_2^k}\dx, \int_{C_i\cap S'}\abs{s_1^k}\dx\right\}\right).
$$
Now, keeping $l$ fixed, pick an arbitrary $\hat{\eps}>0$, and choose $M(\hat{\eps},l)$ large enough such that, for all $i$ and for $j=1,2$, we have
$$
\int_{C_i\cap S'} \abs{s_j^k}\dx\ge \int_{C_i\cap S'} \abs{s_j}\dx - \frac{\hat{\eps}l}{Lc}
$$
for any $k\ge M$. We thus obtain for any $k$ large enough that
\begin{align*}
\mathcal{G}(\{s_j^k\}) &\ge \frac{c}{l}\left(\sum_i \min \left\{\int_{C_i\cap S'}\abs{s_2}\dx, \int_{C_i\cap S'}\abs{s_1}\dx\right\}\right) - \hat{\eps} \\
&\ge \frac{c}{l}\delta\abs{S'} - \hat{\eps},
\end{align*}
by~\eqref{eq:outer_est}.

Finally, by taking $\hat{\eps}$ and $l$ small enough, keeping $\delta>0$ and $S'$ fixed, we can thus make the $\curl$ arbitrarily large, which is a contradiction to energy boundedness.
\end{proof}

\begin{lemma} \label{lemma:curl}
Consider an open set $V \subset\subset \Omega$. We then have, for any $\{s_j\}$ on $\Omega$,
\begin{equation}\label{eq:smooth_curl}
\mathcal{G}(\{s_j\})|_V \ge c  \int_V \abs{\frac{\partial \abs{s_2}}{\partial x_1}}  +  \abs{\frac{\partial \abs{s_1}}{\partial x_2}} + \abs{\frac{\partial \abs{s_2}}{\partial x_3}}  +  \abs{\frac{\partial \abs{s_1}}{\partial x_3}}\dx,
\end{equation}
for some geometric constant $c>0$, whereby the coordinates $x_1,x_2$ and $x_3$ are adapted to $m_1, m_2$ and $m_1\times m_2$, respectively. 
\end{lemma}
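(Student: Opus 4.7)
The plan is to work in the basis dual to $(m_1, m_2, m_1\times m_2)$ and exploit that each coordinate partial derivative is then a directional derivative along a vector lying in an appropriate $m_j^\perp$. Specifically, let $(n_1, n_2, n_3)$ be defined by $n_i\cdot m_j = \delta_{ij}$ with $m_3 := m_1\times m_2$. Solving these equations via the BAC-CAB identity gives
\[
n_1 = \frac{m_2\times(m_1\times m_2)}{|m_1\times m_2|^2}, \quad n_2 = \frac{(m_1\times m_2)\times m_1}{|m_1\times m_2|^2}, \quad n_3 = \frac{m_1\times m_2}{|m_1\times m_2|^2},
\]
so in particular $n_1, n_3 \in m_2^\perp$ and $n_2, n_3 \in m_1^\perp$. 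Interpreting the adapted coordinates as $x_i = x\cdot m_i$, a standard chain-rule computation shows $\partial/\partial x_i = n_i\cdot\nabla$, since the Jacobian of $x\mapsto (x_1, x_2, x_3)$ has rows $m_i^T$ and its inverse has columns $n_i$.

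With these geometric preparations in hand, each term on the right-hand side of the claimed inequality would be controlled using two elementary ingredients: the chain-rule inequality $|\partial_v |s_j|| \le |\partial_v s_j|$ (in the sense of positive measures) for any $s_j$ with finite $|\nabla_{m_j^\perp} s_j|(V)$, and the estimate $|\partial_v s_j| \le |v|\cdot |\nabla_{m_j^\perp} s_j|$ whenever $v\in m_j^\perp$, which is immediate from the definition of the tangential gradient. Since $n_1, n_3\in m_2^\perp$ and $n_2, n_3\in m_1^\perp$, combining these yields, as positive measures on $V$,
\[
|\partial|s_2|/\partial x_1| + |\partial|s_2|/\partial x_3| \le (|n_1| + |n_3|)\,|\nabla_{m_2^\perp} s_2|,
\]
\[
|\partial|s_1|/\partial x_2| + |\partial|s_1|/\partial x_3| \le (|n_2| + |n_3|)\,|\nabla_{m_1^\perp} s_1|.
\]
Integrating over $V$ and summing gives the claim with $c^{-1} = \max\{|n_1|+|n_3|,\,|n_2|+|n_3|\}$ --- a positive geometric constant depending only on the angle between $m_1$ and $m_2$ (all $|n_i|$ being proportional to $|m_1\times m_2|^{-1}$, which is finite precisely when $m_1$ and $m_2$ are not parallel).

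The one point that requires care is the measure-theoretic chain-rule inequality $|\partial_v |s_j|| \le |\partial_v s_j|$, which I would justify by mollification: for the smooth approximants $s_j^\epsilon = s_j\ast \rho_\epsilon$, the inequality holds classically because $|s_j^\epsilon|\,\partial_v|s_j^\epsilon| = s_j^\epsilon\cdot \partial_v s_j^\epsilon$ on $\{|s_j^\epsilon|>0\}$ while $\nabla|s_j^\epsilon|=0$ a.e.\ on $\{|s_j^\epsilon|=0\}$, and the lower semicontinuity of total variation lets us pass to the limit $\epsilon\to 0$ while preserving the inequality. Everything else is linear algebra and the direct-derivative estimate already recorded.
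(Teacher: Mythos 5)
Your proof is correct, and it reaches the estimate by a genuinely different, and arguably cleaner, route than the paper. The paper first assumes the $s_j$ smooth, passes to an orthonormal frame via an explicit pair of shear matrices, extracts the $2$- and $3$-components of the row-wise curls of $s_1\otimes m_1$ and $s_2\otimes m_2$ to bound $\partial_{x_2}$- and $\partial_{x_3}$-derivatives of triangular combinations of the components $s_j^i$, and then unravels these by repeated triangle inequalities, before removing smoothness by the mollification result of Prop.~3.3 in \cite{ang2}. You bypass the frame computation entirely: the single observation that the dual basis $(n_1,n_2,n_3)$ of $(m_1,m_2,m_1\times m_2)$ satisfies $n_1,n_3\in m_2^\perp$ and $n_2,n_3\in m_1^\perp$ makes each coordinate partial $\partial/\partial x_i=n_i\cdot\nabla$ a directional derivative that the relevant planar gradient controls, so the whole estimate collapses to $\abs{\partial_v\abs{s_j}}\le\abs{\partial_v s_j}\le \abs{v}\,\abs{\nabla_{m_j^\perp}s_j}$, with the explicit constant $c^{-1}=\max\{\abs{n_1}+\abs{n_3},\,\abs{n_2}+\abs{n_3}\}\propto\abs{m_1\times m_2}^{-1}$; your mollification argument for the measure-valued chain rule then plays exactly the role of the paper's smoothing step (for completeness one should record that $\limsup_{\eps\to 0}\int_V\abs{\partial_v s_j^\eps}\dx\le\abs{\partial_v s_j}(V')$ for $V\subset\subset V'\subset\subset\Omega$, which is where $V\subset\subset\Omega$ enters -- but this is standard, and the paper is no more detailed at the corresponding point). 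A further merit of your argument is that it pins down the reading of ``coordinates adapted to the frame'' under which the lemma is actually true, namely $x_i=x\cdot m_i$ with coordinate directions along the $n_i$: under the alternative reading $x=\sum_i x_i m_i$, with partials along the $m_i$ themselves, the inequality fails for non-orthogonal $m_1,m_2$ (take $s_1$ a nonconstant function of $x\cdot m_1$ alone and $s_2=0$; then $\G(\{s_j\})$ vanishes while $\partial_{m_2}\abs{s_1}\neq 0$), so your choice is also the one needed for Lemmas \ref{lem:main} and \ref{lem:int_min} downstream.
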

\begin{proof}
First assume the $s_i$ are smooth. Next, it is convenient to calculate the row-wise curl of $\beta$ (as above) by first transforming from the $(m_1,m_2,m_1\times m_2)$-frame to an orthonormal one given by first shearing $m_2$ in the $(m_1,m_2)$-plane such that it becomes orthogonal to $m_1$, and then shearing $m_3$ to make it orthogonal to the $(m_1,m_2)$-plane. The corresponding transformation matrix is thus of the form
\begin{equation}
M = \left( 
\begin{array}{ccc}
1 & 0 & \sin\phi\cos\psi\\
0 & 1 & \sin\phi\sin\psi\\
0 & 0 & \cos\phi
\end{array}
\right)
\left(
\begin{array}{ccc}
1 & \cos\theta & 0\\
0 & \sin\theta & 0\\
0 & 0 & 1
\end{array}
\right),
\end{equation}
for some angles $\theta, \phi$ and $\psi$.

Applying this to $s_1\otimes m_1$, and then taking the 3-component of each of the row-curls, gives a pointwise estimate which leads to
\begin{equation}\label{eq:curl_s1}
\int_V \abs{\nabla_{m_1^{\perp}} s_1}\geq c_1\int_V \abs{\partial_{x_2}s_1^3} + \abs{\partial_{x_2}\left(s_1^2 + c_2s_1^3\right)} + \abs{\partial_{x_2}\left(s_1^1 + c_3s_1^2 + c_4s_1^3\right)}\dx,
\end{equation}
for some $c_i(\theta,\phi,\psi)>0$, where the $s_1^i$ are components of $s_1$ in the original $(m_1, m_2,m_1\times m_2)$-frame.
An entirely analogous inequality can be obtained for $\curl(s_2\otimes m_2)$ by making the subscript switch $1\leftrightarrow2$ in (\ref{eq:curl_s1}). By taking the 2-components of the row-curls of $s_1\otimes m_1$ and $s_2\otimes m_2$ we also get analogous inequalities with $x_3$- rather than $x_2$-derivatives everywhere on the rhs (in both cases). Repeated application of the triangle inequality now gives (\ref{eq:smooth_curl}).

Finally, we remove the assumption of smoothness on $\{s_j\}$. Thus, by the first part of the proof of Proposition 3.3 in \cite{ang2} (eq. 3.23, in particular, which doesn't depend on slip-patch-boundary regularity), we can mollify the $s_i$ to get
smooth $(s_i)_{\eps}$ such that
\begin{equation} \label{eq:approx}
\|s_j - (s_j)_{\eps}\|_{L^p(V)}\leq\eps \quad \mathrm{and} \quad \int_V\mathcal{G}(s_j)|\geq \int_V\mathcal{G}(s_j)_{\eps}| -\eps.
\end{equation}
Then we apply (\ref{eq:smooth_curl}) to $(s_j)_{\eps}$ and appeal to the $L^p$-lower semicontinuity of all the derivative terms on the right-hand side as $\eps\rightarrow 0$ to get the desired result.
\end{proof}

\begin{remark}\label{rem:comp}
One also obtains inequalities analogous to (\ref{eq:smooth_curl}) by replacing the absolute values of $s_1$ and $s_2$ with any of their components, which will be useful below.
\end{remark}

\begin{lemma}\label{lem:int_min}
Suppose we have a single-plane sequence $\{s_j\}^k$ with uniformly bounded dislocation-and-hardening energy, with active slips $s_1^j$, $s_2^j$ which mix in the limit on a covering of parallelepipeds $V = \cup_i C_i\supset S'$, as in Theorem \ref{lem:main}. Then, for sufficiently large $j$, there exists $c>0$ such that
\begin{equation}\label{eq:L1_est}
\int_V \abs{\frac{\partial \abs{s_{2}^j}}{\partial x_1}}  +  \abs{\frac{\partial \abs{s_{1}^j}}{\partial x_2}}\dx\ge\frac{c}{l}\sum_i \min \left\{\int_{C_i\cap S'}\abs{s_2^j}\dx, \int_{C_i\cap S'}\abs{s_1^j}\dx\right\},
\end{equation}
for coordinates $x_i$ aligned with the $(m_1,m_2,m_1\times m_2)$-frame, provided the $x_1$- and $x_2$-coordinate extents of the $C_i$ are no greater than $l$, where $c$ depends on the energy bound.
\end{lemma}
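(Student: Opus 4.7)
The plan is to localise to each parallelepiped $C=C_i$, establish a two-dimensional inequality on a.e.\ $(x_1,x_2)$-slice exploiting the disjoint-support condition $fg=0$ coming from single slip, and then bridge to a three-dimensional bound using $x_3$-total-variation control inherited from the dislocation energy. Since the $C_i$ are disjoint and $\int_{C_i\cap S'}\le\int_{C_i}$, it suffices to prove, for each $C$ with $x_1,x_2$-extents $l_1,l_2\le l$, and writing $f:=|s_1^j|,\,g:=|s_2^j|$, the stronger inequality $\int_C(|\partial_{x_1}g|+|\partial_{x_2}f|)\,dx\ge(c/l)\min(\int_C f,\int_C g)$ up to a uniform additive error which will be absorbed at the end.

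For the slice-wise step, fix $x_3$ and work on the rectangle $R=[0,l_1]\times[0,l_2]$. I would let $L_g:=\{x_2:g(\cdot,x_2,x_3)>0\text{ for a.e.\ }x_1\}$ and argue by cases. If $\int_{L_g^c}\int g\,dx_1\,dx_2\ge\tfrac12 G(x_3)$, where $G(x_3):=\int_R g$, then on each row in $L_g^c$ the function $g(\cdot,x_2,x_3)$ vanishes somewhere, and the one-dimensional BV bound $\int|\partial_{x_1}g|\,dx_1\ge\sup_{x_1}g\ge(1/l_1)\int g\,dx_1$ integrates to $\int_R|\partial_{x_1}g|\ge G(x_3)/(2l)$. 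Otherwise $|L_g|>0$ and $fg=0$ a.e.\ forces $f(\cdot,x_2,x_3)=0$ a.e.\ in $x_1$ for every $x_2\in L_g$; by Fubini, for a.e.\ $x_1$ the column $f(x_1,\cdot,x_3)$ vanishes on the positive-measure set $L_g\subset[0,l_2]$, and the one-dimensional BV bound along $x_2$, integrated in $x_1$, produces $\int_R|\partial_{x_2}f|\ge F(x_3)/l$ with $F(x_3):=\int_R f$. Either case gives the slice bound $\Phi(x_3):=\int_R(|\partial_{x_1}g|+|\partial_{x_2}f|)\,dx_1\,dx_2\ge (1/(2l))\min(F(x_3),G(x_3))$, since $F,G\ge\min(F,G)$.

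The hard part will be the bridging step. Integrating the slice bound in $x_3$ yields only $\int_C(|\partial_{x_1}g|+|\partial_{x_2}f|)\,dx\ge(1/(2l))\int_0^{l_3}\min(F,G)\,dx_3$, and the concavity of $\min$ gives the generically reverse inequality $\int\min\le\min\int$. To turn this around, I would use the bounded-dislocation-energy hypothesis, which via Lemma~\ref{lemma:curl} also controls $\|\partial_{x_3}f\|_{L^1(C)}+\|\partial_{x_3}g\|_{L^1(C)}\le K_C$ with $\sum_i K_{C_i}\le K_{\mathrm{global}}$; this yields $\mathrm{TV}_{x_3}F,\mathrm{TV}_{x_3}G\le K_C$ and hence $|F-\bar F|,|G-\bar G|\le K_C$ pointwise. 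From $\min(F,G)=\tfrac12(F+G-|F-G|)$ together with $|F-G|\le|F-\bar F|+|\bar F-\bar G|+|\bar G-G|$ one gets $\int_0^{l_3}\min(F,G)\,dx_3\ge\min(\int_C f,\int_C g)-l_3 K_C$. Summing over $i$ and using $l_{3,i}\le l$ and $\sum_i K_{C_i}\le K_{\mathrm{global}}$ then gives $\int_V(|\partial_{x_1}g|+|\partial_{x_2}f|)\,dx\ge(1/(2l))\sum_i\min(\int_{C_i}f,\int_{C_i}g)-K_{\mathrm{global}}/2$. For $j$ sufficiently large, the non-degeneracy $|s_1|,|s_2|\ge\delta$ on $S'$ inherited from the setup of Lemma~\ref{lem:main} together with weak $L^p$-convergence produces a $j$-uniform lower bound on $\sum_i\min(\int_{C_i\cap S'}|s_1^j|,\int_{C_i\cap S'}|s_2^j|)$, which lets me absorb the additive $K_{\mathrm{global}}/2$ into a slightly smaller prefactor $c>0$ depending only on the energy bound, yielding the claim.
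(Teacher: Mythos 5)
Your slice-wise step is, modulo cosmetic differences, the same Conti--Ortiz exclusion argument the paper uses: the paper defines the sets $\omega^1_{k,\tau},\omega^2_{k,\tau}$ and notes that the single-plane condition forces one of them to be null on a.e.\ slice, which is exactly your $L_g$-dichotomy (your half-mass split is a mild variant; note you should dispose of the trivial case $G(x_3)=0$ before asserting $|L_g|>0$, and the vanishing of $f$ on rows in $L_g$ holds only for a.e.\ $x_2\in L_g$, which suffices). Where you genuinely diverge is the bridging step from $\int\min(F,G)\,dx_3$ to $\min(\int_C f,\int_C g)$. The paper proves the reverse-Jensen-type inequality \eqref{eq:co3} softly: using the $x_3$-derivative control from Lemma~\ref{lemma:curl} on a fine mesh of slices, together with a component-sign trick to defeat cancellation under weak convergence, it bounds the slice-wise sum $\sum_i\min\{|a_i^k|,|b_i^k|\}$ below by $\delta\delta_1/2$ on a large fraction of a good set $T$, so that the left side of \eqref{eq:co3} exceeds a fixed constant while the right side is dominated by the hardening bound. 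You instead run a quantitative oscillation estimate, $|F-\bar F|\le \mathrm{TV}_{x_3}F\le K_C$ plus $\min(F,G)=\tfrac12(F+G-|F-G|)$, to get $\int\min(F,G)\ge\min(\int_C f,\int_C g)-l_3K_C$, and then absorb the summed additive error using a $j$-uniform lower bound on $\sum_i\min\bigl(\int_{C_i\cap S'}|s_1^j|,\int_{C_i\cap S'}|s_2^j|\bigr)$. That lower bound is legitimate and in fact cleaner than the paper's route: weak lower semicontinuity of the $L^1(C_i\cap S')$-norm under weak $L^p$-convergence (this is exactly \eqref{eq:moll}) gives $\liminf_j\int_{C_i\cap S'}|s_m^j|\ge\delta|C_i\cap S'|$ for $m=1,2$, and summing over the finitely many $C_i$ yields $\Sigma'\gtrsim\delta|S'|$ for large $j$, with no need for the paper's fine-mesh slice argument or the component-sign reduction.

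Two caveats, both benign in the intended application but deviations from the lemma as stated. First, your error term $\frac{1}{2l}\sum_i l_{3,i}K_{C_i}$ is only controlled if the $x_3$-extents satisfy $l_{3,i}\le l$ (or are at least comparable to $l$); the lemma deliberately constrains only the $x_1$- and $x_2$-extents, and for tall boxes your additive error becomes $\frac{1}{2l}\,\mathrm{diam}_{x_3}(\Omega)\,K_{\mathrm{global}}$, which scales like the main term and cannot be absorbed. The paper's soft argument is insensitive to $l_3$. In the proof of Lemma~\ref{lem:main} the $C_i$ have all edge-lengths $\le l$, so this costs nothing there, but you should state the extra hypothesis. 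Second, your absorption produces $c\sim(\gamma-lK)/C_{\mathrm{hard}}$ with $\gamma\sim\delta|S'|$, i.e.\ a positive constant only for $l<\gamma/K$; the paper's $c$ is independent of $l$. Again harmless, since Lemma~\ref{lem:main} sends $l\to0$ with $\delta$, $S'$ and the energy bound fixed, so your $c$ stays bounded below in the relevant regime --- but the restriction should be made explicit, since the statement asserts only that $c$ depends on the energy bound.
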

\begin{proof}
Consider a parallelepiped $P = Q\times [0,t]$, with $Q$ a parallelogram aligned with the $x_1, x_2$-coordinate mesh, having coordinate extents $l_1$ and $l_2$ which are dominated by a constant $l$. To be specific, and without loss of generality, let $Q = \{(x_1,x_2) : x_1\in(0,l_1), x_2\in(0,l_2)\}$. Then the single-plane condition allows us to employ the argument of Lemma 4.3 in \cite{co}, which also works for non-orthogonal coordinates, on slices $Q\times\{\tau\}$. Thus, for each $k$ and $\tau$, we define the following subsets of $Q$:
\bq
\omega^1_{k,\tau} & = & \left\{x_1\in (0,l_1): s_1^k(x_1,x_2,\tau)\neq 0~\textrm{for a.e.}~x_2\in(0,l_2)\right\}, \\
\omega^2_{k,\tau} & = & \left\{x_2\in (0,l_2): s_2^k(x_1,x_2,\tau)\neq 0~\textrm{for a.e.}~x_1\in(0,l_1)\right\}, 
\eq
and, from the single-plane condition and basic measure theory, we conclude that for each k and a.e. $\tau\in [0,t]$ at least one of $\omega^1_{k,\tau}$ and $\omega^2_{k,\tau}$ must be a null set.
 
For the sake of argument, assume that, for a given $k$ and $\tau$, $\omega^1_{k,\tau}$ is a null set. Then, for a.e. $x_1\in(0,l_1)$, we have
\begin{equation}
\abs{s_1^k}(x_1,x_2,\tau)\leq\int_0^{l_2}\frac{\p\abs{s_1^k}}{\p x_2}(x_1,x_2',\tau)~\dx_2'.
\end{equation}
Hence, integrating w.r.t. $x_2$ and then $x_1$, we obtain
\begin{equation}
\iint_Q\abs{s_1^k}\dx\leq l_2\iint_Q\frac{\p\abs{s_1^k}}{\p x_2}\dx,
\end{equation}
and, by a similar argument for the other possible case, we therefore see that, for a.e. $\tau\in (0,t)$,
\begin{equation} \label{eq:co2}
\textrm{either}\quad\iint_{Q} \abs{\frac{\partial \abs{s_2^k}}{\partial x_1}}(\cdot,\tau)\dx \geq\frac{1}{l} \iint_{Q}\abs{s_2^k}(\cdot,\tau)
\dx,
\end{equation}
\begin{equation}
\quad\textrm{or}\quad
\iint_{Q} \abs{\frac{\partial \abs{s_1^k}}{\partial x_2}}(\cdot,\tau) \dx\geq\frac{1}{l}\iint_{Q}\abs{s_1^k}(\cdot,\tau)\dx.
\end{equation}

Define $C_i^t = C_i\cap S'\cap \{x_3=t\}$, so that (\ref{eq:co2}) implies
\begin{equation}\label{eq:co4}
\int_V \abs{\frac{\partial \abs{s_{2}^j}}{\partial x_1}}  +  \abs{\frac{\partial \abs{s_{1}^j}}{\partial x_2}}\dx\ge
\frac{1}{l}\int \sum_i\min\left\{\iint_{C_i^t} |s_1^k|\dx,\iint_{C_i^t} |s_2^k|\dx\right\}~\dt.
\end{equation}

Write $a_i^k(t) = \iint_{C_i^t} \abs{s_1^k}\dx$ and $b_i^k(t) = \iint_{C_i^t} \abs{s_2^k}\dx$. Then, in order to apply (\ref{eq:co4}), it suffices to show that
\begin{equation} \label{eq:co3}
\int\sum_i\min\left\{\abs{a_i^k}, \abs{b_i^k}\right\}~\dt \geq c\sum_i\min\left\{\int\abs{a_i^k}~\dt, \int\abs{b_i^k}~\dt\right\},
\end{equation}
for $k$ large, and some constant $c>0$.

Now, by assumption, $s_1^k$ and $s_2^k$ weakly converge in $L^p$, with limits greater than some $\delta>0$ in absolute value on $S'$. Let $f(t) = \mathcal{L}^2(S'\cap\{x_3 = t\})$. Then $\exists T\subset\mathbb{R}$ with $\abs{T}\geq\delta_1$, such that $f(t)\geq\delta_1$ on $T$, for some $\delta_1>0$, since $\int f(t)~dt = |S'|>0$. By taking a fine mesh of many points in $T$, and noting that Lemma \ref{lemma:curl} (resp. Remark \ref{rem:comp}) gives control on the component-wise variation of $\iint s_1^k~dx_1dx_2$ and $\iint s_2^k~dx_2$ in the $t$-direction, we see by weak convergence that $\sum_i\min\left\{\abs{a_i^k}, \abs{b_i^k}\right\}>\delta\delta_1/2$ on an arbitrarily large fraction of $T$, if $k$ is large enough (we may assume w.l.o.g. that one component of each of $s_1$ and $s_2$ is larger than $\delta$ on $S'$, then weak convergence of  $\iint s_1^k~dx_1dx_2$ and $\iint s_2^k~dx_1dx_2$, together with the curl bound, prevent $a_i^k$ and $b_i^k$ from straying too close to zero). Thus, the left-hand side of (\ref{eq:co3}) is greater than some $C$ for $k$ large. Since the right-hand side of (\ref{eq:co3}) is bounded by the (hardening) energy, we see that (\ref{eq:co3}) does in fact hold, for a constant $c$ which depends on the energy bound.
\end{proof}

\begin{remark}
While Lemma \ref{lem:main} still holds in the case $p=1$, $L^1$-hardening is of course not sufficient to guarantee $s_j^k \weak s_j$ in $L^1$ along (a subsequence of) a minimising sequence for an elastoplastic energy of the form $E$. 
\end{remark}

\subsection{Proof of Theorem \ref{thm:main}} Putting together Lemmas \ref{wk_minors}-\ref{lem:int_min} finally gives our main result, Theorem \ref{thm:main}, since we only need an upper bound on the dislocation and hardening energy along bounded-energy sequences to apply these results. Moreover, Theorem \ref{thm:main_lin} also follows, since the stronger requirement $p>2$ was only needed to control the minors in the nonlinear elastic energy. 

\subsection{Counter-example for $p=1$} We cannot expect an analogue of our main existence result for $p=1$ rate-independent dissipation, even for geometrically linear elasticity, by virtue of the following essentially 2-d example, which highlights the problem of slip concentration. 

\begin{example}\label{ex:2}
Suppose $p=1$, and consider two infinitesimally thin slip-lines (i.e. slip-planes viewed side-on) crossing at right angles, and exiting a shear sample at free boundaries. Now, by any reasonable interpretation of the relaxed side condition for measures, this construction should have infinite plastic energy. However, one can approximate these crossing slip lines weakly-$\ast$ in the space of Radon measures by fattening both of the slip lines a little, while preserving the total shear, and cutting off one of the resulting mollified shear bands just outside the region of overlap, such that $(\textbf{RSC})$ is now satisfied. The dislocation energy of such an approximation is just twice the total shear on the cut-off shear band, and in this way one can therefore pass to the weak-$\ast$ limit of crossing slip lines with finite plastic energy. In other words, for $p=1$ any reasonable interpretation of $(\textbf{RSC})$ produces a single-plane energy which fails to be lower semi-continuous.
\end{example}

\section*{Acknowledgments}
MK was supported by GA\v{C}R through projects 14-15264S and 16-34894L.
This research was partly conducted when MK held the visiting Giovanni-Prodi professorship in the
Institute of Mathematics, University of W\"{u}rzburg. Its support and hospitality are gratefully acknowledged. PWD is partially supported by the German Scholars Organization / Carl-Zeiss-Stiftung via the Wissenschaftler-R{\"u}ckkehrprogramm.

\end{document}